\apptocmd{\sloppy}{\hbadness 10000\relax}{}{}
\newtheorem{theorem}{Theorem}[section]
\newtheorem{lemma}[theorem]{Lemma}
\newtheorem{proposition}[theorem]{Proposition}
\theoremstyle{definition}
\newtheorem{definition}[theorem]{Definition}
\theoremstyle{remark}
\newtheorem{remark}[theorem]{Remark}
\newcommand{\CC}{\mathbb{C}} 
\newcommand{\C}{\mathbb{C}}
\newcommand{\RR}{\mathbb{R}} 
\newcommand{\T}{\mathrm{T}}
\newcommand{\NN}{\mathbb{N}}
\newcommand{\camo}{\mathcal{C}_n}
\newcommand{\holo}{\mathcal{O}}
\newcommand{\matnn}{\mathrm{M}_n( \mathbb{C})}
\newcommand{\matnR}{\mathrm{M}_n( \mathbb{R})}
\newcommand{\mabove}{\mathcal{M}}
\newcommand{\maboveR}{\mathcal{M}_{\mathbb{R}}}
\newcommand{\cabove}{\hat{\mathcal{C}}_n}
\newcommand{\caboveR}{\hat{\mathcal{C}}_n^\mathbb{R}}
\newcommand{\caboveF}{\tilde{\mathcal{C}}_n}
\newcommand{\caboveFR}{\tilde{\mathcal{C}}_n^\mathbb{R}}
\newcommand{\caboveB}{\tilde{\mathcal{C}}'_n}
\newcommand{\caboveBr}{\tilde{\mathcal{C}}_n^{r}}
\newcommand{\ssubset}{\subset\joinrel\subset}
\newcommand{\dif}{\mathrm{d}}
\newcommand{\taut}{\tau\mathrm{Aut}}
\DeclareMathOperator{\tr}{tr}
\DeclareMathOperator{\rank}{rank}
\DeclareMathOperator{\id}{id}
\DeclareMathOperator{\re}{Re}
\DeclareMathOperator{\im}{Im}
\DeclareMathOperator{\gl}{GL}
\DeclareMathOperator{\gln}{G}
\DeclareMathOperator{\un}{U}
\DeclareMathOperator{\aut}{Aut}
\DeclareMathOperator{\diff}{Diff}
\DeclareMathOperator{\spec}{Spec}
\newcommand\restr[2]{{
  \left.\kern-\nulldelimiterspace 
  #1 
  \littletaller 
  \right|_{#2} 
  }}
\newcommand{\littletaller}{\mathchoice{\vphantom{\big|}}{}{}{}}
\title[Holomorphic Approximation of Symplectic Diffeomorphisms]{Holomorphic Approximation of Symplectic Diffeomorphisms for Calogero--Moser Spaces}
\author{Gaofeng Huang}
\address{Mathematical Institute \\ University of Bern \\  
Bern, Switzerland}
\email{gaofeng.huang@unibe.ch}
\thanks{The author was supported by Schweizerisches Nationalfonds [200021-207335].}
\begin{document}

\subjclass{MSC2020: 32M17 (Primary),  32E30, 32M25 (Secondary).}
\keywords{Hamiltonian Carleman approximation, symplectic density property, Calogero--Moser space, complexification, totally real submanifold.}

\begin{abstract}
{The real Calogero--Moser space $\mathcal{C}_n^\mathbb{R}$ is a noncompact, totally real submanifold of the complex Calogero--Moser space $\mathcal{C}_n$. 
We prove that every symplectic diffeomorphism of $\mathcal{C}_n^\mathbb{R}$ smoothly isotopic to the identity can be approximated in the fine Whitney topology -- the strongest in this context -- by holomorphic symplectic automorphisms of $\mathcal{C}_n$ that preserve $\mathcal{C}_n^\mathbb{R}$. A key ingredient in our proof is a refined version of the symplectic density property of $\mathcal{C}_n$.}
\end{abstract}

\maketitle

\tableofcontents

\section{Introduction}

We study approximation for symplectic diffeomorphisms of the real Calogero--Moser space $\camo^\RR$ onto itself by holomorphic symplectic automorphisms on the complex Calogero--Moser space $\camo$, $n \in \NN$.

The complex Calogero--Moser space is a complex affine manifold, endowed with a holomorphic symplectic form, describing the phase space of $n$ indistinguishable particles on a plane interacting pairwise via an inverse square potential. 

The real Calogero--Moser space $\camo^\RR$ was constructed by Kazhdan, Kostant and Sternberg \cite{MR478225} as a real symplectic reduction (cf.\ Definition \ref{definition: realCM}). Two decades later, the complex Calogero--Moser space $\camo$ was introduced by Wilson \cite{MR1626461} in the complex setting as follows. 

Let $\mabove$ be the direct sum $\matnn \oplus \matnn \oplus \CC^n \oplus (\CC^n)^\ast$, where $\matnn$ is the $\CC$-vector space of square matrices of size $n$. The space $\mabove$ can be endowed with the holomorphic symplectic form $\omega = \tr ( d X \wedge dY + d v \wedge d w )$. Moreover, the general linear group $\gl_n(\CC)$ acts on $\mabove$ as
\begin{align*}
	g \cdot (X, Y, v, w ) = ( g X g^{-1}, g Y g^{-1}, g v, w g^{-1} ), \, \, g \in \gl_n(\CC)
\end{align*}
which preserves the symplectic form. Thus this action induces a complex moment map 
\begin{align*}
	\mu \colon \mabove \to \matnn,  \,
	(X, Y, v, w) \mapsto [X, Y] + v w
\end{align*}
which is equivariant with respect to the above action on $\mabove$ and the coadjoint action on $\matnn \cong \mathfrak{gl}_n^\ast$.
Take the coadjoint invariant point $ i I_n \in \matnn$. By Wilson \cite{MR1626461}, the group action is free on the preimage $\mu^{-1}(i I_n)$.

\begin{definition}
	The \emph{Calogero--Moser space} $\camo$ is the complex symplectic reduction $\mu^{-1} (  i I_n) / \gl_n(\CC)$.
\end{definition}	

Let $\tau \colon \camo \to \camo$ be the antiholomorphic involution given by 
\[
    ( X, Y, v, w) \mapsto ( X^\ast, Y^\ast, i w^\ast, i v^\ast).
\]
Here, the star stands for complex conjugate transpose. Then the fixed-point set of $\tau$ turns out to be real symplectomorphic to the real Calogero--Moser space $\camo^\RR$. In other words, the complex space $\camo$ is a symplectic complexification of $\camo^\RR$, see Definition \ref{definition: realform}. 

\begin{theorem} \label{theorem: realform}
    The complex Calogero--Moser space $(\camo, \omega, \tau)$ is a symplectic complexification of the real Calogero--Moser space $(\camo^\RR, \omega_\RR)$. 
\end{theorem}

\smallskip

Our approximation is what complex analysts call the Carleman approximation. 
It is an approximation of real objects by holomorphic ones in the fine (Whitney) topology.
Torsten Carleman was the first to obtain such a result. He proved that smooth functions on the real line can be approximated by holomorphic functions on the complex plane in the fine topology \cite{carleman1927}. For the following definition we fix a norm $\| \cdot \|_{C^k}$ on the jet-space $\mathcal{J}^k(M)$, cf. Section \ref{section: prelim}.

\begin{definition}
Let $(M_\RR, \omega_\RR)$ be a smooth symplectic manifold and $(M, \omega, \tau)$ a symplectic complexification of $(M_\RR, \omega_\RR)$. For a natural number $k$, we say that $(M, \omega, \tau)$ admits \emph{Hamiltonian $C^k$-Carleman approximation}, if 
for any Hamiltonian diffeomorphism $\varphi$ of $M_\RR$ onto itself (cf.\ Section \ref{section: prelim}) and any positive continuous function $\epsilon$ on $M_\RR$, there exists a holomorphic symplectic automorphism $\Phi$ of $M$ such that $\Phi(M_\RR) = M_\RR$ and the estimate $\| \Phi - \varphi \|_{C^k(p)} < \epsilon (p)$ holds for any $p \in M_\RR$.
\end{definition}

The main result of this paper is: 
\begin{theorem}
    The Calogero--Moser space $(\camo, \omega, \tau)$ admits Hamiltonian $C^k$-Carleman approximation for all $k \in \NN_0$. 
\end{theorem}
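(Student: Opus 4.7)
I would prove the theorem by an iterative Forstneri\v{c}-type push-out, adapted to the symplectic setting with real form. Fix a normal exhaustion $K_1 \ssubset K_2 \ssubset \cdots$ of $\camo^\RR$ by compact subsets that are $\holo(\camo)$-convex, so that $\camo$ itself is exhausted by a corresponding sequence of Runge Stein neighborhoods; since $\camo$ is an affine manifold, such an exhaustion is readily available. Choose also a decreasing sequence $\epsilon_j > 0$ that is small enough to bound $\epsilon$ from above on $K_j$ and, crucially, chosen so that later $C^k$-perturbations at level $\ell > j$ will move points of $K_{j-1}$ by less than $\epsilon_{j-1}/2^{\ell-j+1}$.

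\textbf{Inductive step via the tangential symplectic density property.} Since $\varphi$ is Hamiltonian, it is the time-$1$ map of a smooth Hamiltonian isotopy $\{\varphi_t\}_{t \in [0,1]}$ on $\camo^\RR$. For each $j$ I would partition the isotopy finely, $\varphi = \psi_N \circ \cdots \circ \psi_1$, so that each $\psi_i$ is the time-$1$ map of a compactly supported (after cutting off the Hamiltonian function, not the vector field, so as to preserve symplecticity) real Hamiltonian vector field $X_i$ of very small $C^k$-norm on a neighborhood of $K_j$. Invoking the symplectic density property of $\camo$ from \cite{CaloSymplo}---which is witnessed by $\CC$-complete holomorphic Hamiltonian vector fields whose real-time flows leave $\camo^\RR$ invariant---I apply the tangential Hamiltonian approximation lemma (the analogue of \cite[Lemma~3.1]{MR4423269}) to approximate each $X_i$ on a neighborhood of $K_j$ by a finite sum $\sum_k Y_{i,k}$ of such complete fields. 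Composing the real-time-$1$ flows of the $Y_{i,k}$ in place of the flow of $X_i$, and composing these in turn over $i$, produces a symplectic holomorphic automorphism $\Phi_j$ of $\camo$ with $\Phi_j(\camo^\RR) = \camo^\RR$ that $C^k$-approximates $\varphi$ on $K_j$ within $\epsilon_j$.

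\textbf{Push-out and the main obstacle.} I would then organize the sequence $\{\Phi_j\}$ as a push-out: write $\Phi_j = \Phi_{j-1} \circ \Psi_j$, where $\Psi_j$ is the composition of flows produced at stage $j$ and is, by construction, $C^k$-close to the identity on $\Phi_{j-1}^{-1}(K_{j-1})$, because the residual Hamiltonians at stage $j$ are supported away from $K_{j-1}$ and the tangential approximation inherits this smallness. The error schedule from the first paragraph guarantees that $\Phi = \lim_{j \to \infty} \Phi_j$ exists as a symplectic holomorphic automorphism of $\camo$ with $\Phi(\camo^\RR) = \camo^\RR$ and $\|\Phi - \varphi\|_{C^k(p)} < \epsilon(p)$ for every $p \in \camo^\RR$. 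The main obstacle, and the point that distinguishes the proof from the Euclidean case \cite{MR3794890,MR4423269}, is that $\camo$ has no global holomorphic coordinates and no explicit description of its completion; all control of compositions, of the tangential property, and of the fine $C^k$-topology must be obtained intrinsically, resting entirely on the fact that the symplectic density property of $\camo$ was established by complete fields \emph{already tangent to the real form}. This is where Definition \ref{def-tangential-dp} does the decisive work: without it, one could approximate in the compact-open topology but would have no mechanism to ensure that the resulting automorphism preserves $\camo^\RR$.
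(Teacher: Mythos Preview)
Your architecture is the same as the paper's, but there is a genuine gap in the push-out step. For $\Phi = \lim_j \Phi_j$ to exist as a holomorphic automorphism of the \emph{complex} manifold $\camo$, the correction factors $\Psi_j = \Phi_{j-1}^{-1}\circ\Phi_j$ must be $C^k$-close to the identity on \emph{complex} compacts exhausting $\camo$, not merely on the totally real sets $K_{j-1}\subset\camo^\RR$. Your sentence ``the residual Hamiltonians at stage $j$ are supported away from $K_{j-1}$ and the tangential approximation inherits this smallness'' does not justify this: Proposition~\ref{proposition: approxRbyC} (the tangential density lemma) controls the approximating Lie combination only on a prescribed real compact; it says nothing about a fixed complex neighborhood. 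A holomorphic function that is small on a maximal totally real submanifold is of course small on \emph{some} neighborhood, but with no uniform control on its size, so the sequence of $\Psi_j$ need not converge on any fixed complex ball.

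The paper closes this gap with Theorem~\ref{theorem: LocalCarleman}: the local approximation is performed simultaneously on a ``Saturn-like'' admissible set $K\cup Z_b$, where $Z_b$ is a complex sublevel set of a $\tau$-invariant plurisubharmonic exhaustion. This requires (i) arranging, via cutoff of the real Hamiltonian and an auxiliary interpolated isotopy $\psi_{j,t}$, that the isotopy to be approximated is literally the identity on $Z_R^\RR$, so its Hamiltonian extends by zero to the complex ball $Z_b$ for $b<R$; (ii) the Chirka--Smirnov theorem to get polynomial convexity of $K\cup Z_b$ in $\CC^q$, hence admissibility; and (iii) the parametric $C^k$-approximation on admissible sets (Theorem~\ref{theorem: parametric-admissible}). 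Only then do the resulting tangential automorphisms approximate the identity on the complex ball $Z_b$ (equation~\eqref{ApproxOnZb}), which is exactly what the push-out needs. Your outline omits all three ingredients; without them the limit is at best a symplectic diffeomorphism of $\camo^\RR$, not a holomorphic automorphism of $\camo$. (A secondary point: the density property yields Lie combinations, not sums, so the passage from vector fields to flows needs the algorithm machinery of Lemma~\ref{lemma: algoLiecombi} rather than simple composition of time-$1$ maps.)
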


For such an approximation to hold, the complex symplectic manifold $M$ must admit a large group of holomorphic symplectic automorphisms. A key ingredient in establishing Carleman approximation for automorphisms is the Anders{\'e}n--Lempert theory, which concerns the approximation of biholomorphic mappings between Runge domains by holomorphic symplectic automorphisms. 
A precise way in the Anders{\'e}n-Lempert theory  to express the abundance of holomorphic automorphisms is Varolin's density property \cite{MR1829353}. The following is its symplectic counterpart.  

\begin{definition} \label{definition: SDP}
A complex symplectic manifold $(M, \omega)$ is said to have \emph{the symplectic density property} if the Lie algebra generated by $\CC$-complete holomorphic symplectic vector fields is dense in the Lie algebra of all holomorphic symplectic vector fields in the compact-open topology. 
\end{definition}
 
Known nontrivial examples of Stein symplectic manifolds satisfying the symplectic density property include even-dimensional complex Euclidean space \cite{MR1408866} 
and the Calogero--Moser space \cite{CaloSymplo}. In both cases, the vanishing of the first de Rham cohomology reduces the property to \emph{the Hamiltonian density property}, where only Hamiltonian vector fields are considered. The Hamiltonian density property was also established for closed coadjoint orbits of complex Lie groups in  \cite{MR4423269}. 
This property allows approximation of real Hamiltonian vector fields on compact subsets by  sums of holomorphic Hamiltonian vector fields, whose real parts are tangent to a given totally real submanifold; see \cite{MR4423269}*{Lemma 3.1}. 

\smallskip

Carleman approximation of functions and maps has a long tradition in complex analysis, for a comprehensive overview, see \cite{MR4264040}, and for a recent result, see \cite{MR4459548}. 

There have been only two results on  Carleman approximation of smooth diffeomorphisms on noncompact totally real submanifolds. 
Recall that a $C^1$ real submanifold $N$ of a complex manifold $M$ is called totally real, if at each point $p$ of $N$ the tangent space $T_p N$ contains no complex line. 
The first result is by Kutzschebauch and Wold \cite{MR3794890} who proved Carleman approximation of diffeomorphisms from $\RR^s$ onto itself by holomorphic automorphisms of $\CC^n$ in the fine topology when $s < n$. Their proof relies on Anders{\'e}n--Lempert theory and Carleman approximation by entire functions. This result generalized earlier works on approximation by holomorphic automorphisms on compact totally real submanifolds, including Forstneri\v{c}--Rosay \cite{MR1213106}, Forstneri\v{c} \cite{MR1314745} and Forstneri\v{c}--L\o w--\O vrelid \cite{MR1852309}. In particular, \cite{MR1852309} studied the approximation of certain diffeomorphisms between compact totally real submanifolds by holomorphic $\omega$-automorphisms of $\CC^{2n}$, where $\omega$ denotes either the standard holomorphic volume form or the holomorphic symplectic form. 

The second result is by Deng and Wold \cite{MR4423269} who established Hamiltonian Carleman approximation for closed coadjoint orbits of complex Lie groups. Their approach not only yields holomorphic symplectic automorphisms preserving the real part but also addresses the case $\RR^{2n}$ embedded in $\CC^{2n}$ with the standard symplectic form $\omega = \sum_{j = 1}^n dz_j \wedge d w_j$. The proofs of all these results rely heavily on the density property of $\CC^n$ or on the 
symplectic density property of $\CC^{2n}$.
Even the result in \cite{MR4423269} on coadjoint orbits 
depends on it, as it is obtained by restricting the objects from the coadjoint representation, which symplectically is isomorphic to $\CC^{2n}$, to the coadjoint orbits. 

Our result is the first one where the ambient complex space is much more complicated than affine space. A crucial component of our proof is the fact that the
symplectic density property (see Definition \ref{definition: SDP}) of the Calogero--Moser space is proven using complete holomorphic vector fields, whose real-time flows preserve the real Calogero--Moser space; see Definition \ref{def-tangential-dp}.

\bigskip

The article is structured as follows: Section \ref{section: prelim} introduces the setup. In Section \ref{section: realform}, we prove  Theorem \ref{theorem: realform}, which claims that the real Calogero--Moser space $\camo^\RR$ has the complex one as a symplectic complexification. 
Section \ref{section: tangencyfunction} applies the symplectic density property of $\camo$ to express real algebraic symplectic vector fields as Lie combinations of complete symplectic vector fields (Proposition \ref{proposition: realADP}). This allows for local approximation of real vector fields on $\camo^\RR$ by real Lie combinations of complete vector fields whose real-time flows preserve $\camo^\RR$ (Proposition \ref{proposition: approxRbyC}). Section \ref{section: localVFnAUTO} extends this local approximation to automorphisms on Saturn-like subsets of $\camo$. This step is crucial in the push-out method, which constructs a holomorphic automorphism that is close to the identity inside a complex ball (Theorem \ref{theorem: LocalCarleman}). 
Section \ref{section: globalAUTO} combines local approximation with the push-out method to obtain global approximation of symplectic diffeomorphisms by holomorphic automorphisms (Theorem \ref{theorem: HamCarleman}). The Appendix examines an alternative totally real submanifold for which the complex Calogero--Moser space serves as a complexification.

\section{Preliminaries} \label{section: prelim}

\subsection{Hamiltonian diffeomorphism}\label{hamdiffeo}
The objects to be approximated are certain symplectic diffeomorphisms which are isotopic to the identity. 

Let $(M, \Omega)$ be a smooth symplectic manifold without boundary. A \emph{symplectic isotopy} is a jointly smooth map $\phi \colon [0, 1] \times M \to M$ such that $\phi_t$ is symplectic for every $t$ in $[0,1]$ and $\phi_0 = \id$. The isotopy $\phi_t$ is generated by a smooth family of vector fields $V_t$ with $d \phi_t / dt= V_t \circ \phi_t, \, \phi_0 = \id$. As $\phi_t$ is symplectic, the vector fields $V_t$ are symplectic by Cartan's formula
\begin{align*}
		0 =  \frac{d}{dt}  \phi_t^\ast \Omega  = \phi_{t}^\ast L_{V_{t}} \Omega = \phi_t^\ast d( \iota (V_t) \Omega )
\end{align*} 
where $L_{V_t}$ is the Lie derivative with respect to $V_t$. A \emph{Hamiltonian isotopy} is a symplectic isotopy such that the closed one-form $\iota(V_t) \Omega$ is exact, namely $V_t$ is Hamiltonian, for all $t \in [0,1]$. If the manifold $M$ is simply connected, then every symplectic isotopy is Hamiltonian (the smooth $t$-dependence of the Hamiltonian functions can be achieved by fixing a reference point $p_0$ in $M$ and choosing $H_t(p_0) = 0$ for all $t$ in $[0,1]$).  
Moreover, we call a symplectomorphism which is the endpoint of a Hamiltonian isotopy a \emph{Hamiltonian diffeomorphism}. Hamiltonian diffeomorphisms form a normal subgroup of the group of symplectic diffeomorphisms, see \cite[\S 3.1]{MR3674984}. 

\subsection{Pointwise seminorm}
Next, we explain the notation $\| \cdot \|_{C^k(\cdot)}$, $k \ge 1$,  following Manne--Wold--\O vrelid \cite[\S 2.2]{MR2854106} and refer to \cite[Ch. II]{MR0341518} for details. Let $V \subset M$ be a subset and $p \in V$. Consider the equivalence relation on germs of $C^k$-smooth complex-valued functions at $p$: $f_p \sim g_p$ if and only if $f-g$ vanishes to $k$-th order at $p$. Denote by $\mathcal{J}_p^k$ the set of equivalence classes, which forms a finite dimensional vector space. The union 
\[  \mathcal{J}^k(M, V) = \bigcup_{p \in V} \mathcal{J}_p^k
\]
can be endowed with the structure of a complex vector bundle over $V$. Each $C^k$-smooth function $f$ on $M$ induces a continuous section $\mathcal{J}^k(f)$ of $\mathcal{J}^k(M, V)$ by $\mathcal{J}^k(f)(p) = [f_p]$. Choose fiberwise a norm $\lvert \cdot \rvert$ on $\mathcal{J}^k(M, V)$ which varies continuously with respect to $p$. The pointwise seminorm $\| \cdot \|_{C^k(\cdot)}$ for $f$ is simply the norm of the induced $k$-jet
\[
    \| f \|_{C^k(p)} := \lvert \mathcal{J}^k(f)(p) \rvert
\] 
For a compact $K \subset M$, we set 
\[
    \| f \|_{C^k(K)} := \sup_{p \in K} \| f \|_{C^k(p)}
\]
For a smooth mapping $\Phi \colon M \to M$ the pointwise seminorm with respect to a local smooth chart $\alpha = ( \alpha_1, \alpha_2, \dots, \alpha_m) \colon U \to \alpha(U) \subset \RR^{m}$ is 
\[
    \| \Phi \|_{C^k(p)} := \sum_{j = 1}^m  \| \alpha_j \circ \Phi \|_{C^k(p)}
\]
Similarly we have $\| \Phi \|_{C^k(K)}$ by taking the supremum over a compact $K$. A different local chart $\beta \colon U' \to \beta(U')$ yields another pointwise seminorm $\| \cdot \|_{C^k(\cdot)}'$ satisfying 
\[     C(p)^{-1} \| \Phi \|_{C^k(p)} \le \| \Phi \|_{C^k(p)}'  \le C(p) \| \Phi \|_{C^k(p)}
\]
where $C$ is a positive continuous function on $U \cap U'$, therefore preserving the approximation on compact subsets (up to rescaling). 

\subsection{Holomorphic vector fields} Let $M$ be a complex manifold. A real vector field is a section of the real tangent bundle $\T M$, while a complex vector field is a section of the complexified tangent bundle $\T M \otimes_{\RR} \CC$. Through the almost complex structure $J$ on $\T M$, the complex tangent bundle decomposes into the holomorphic and antiholomorphic subbundles
\[ 
    \T M \otimes_\RR \CC = \T^{1,0}M \oplus \T^{0,1}M
\]
We can identify the real tangent bundle $TM$ and the holomorphic tangent bundle $\T^{1,0}M$ via the $\RR$-linear isomorphism (see e.g. \cite[\S 1.6]{MR3700709})
\[
    \alpha \colon \T M \hookrightarrow \T M \otimes_\RR \CC \to \T^{1,0}M, \, V \mapsto \frac{1}{2}( V - i JV)
\]
which has the inverse $\alpha^{-1}(W) = 2 \re (W)$ for $W \in \T^{1,0}M$. We have the following commutative diagram
\[
\begin{CD}
	\T M @>J>>	 \T M \\
	@V\alpha VV 			 @VV \alpha V\\
	\T^{1,0} M	@>i>>	\T^{1,0} M
\end{CD}
\]
\smallskip
A real vector field $V$ on $M$ is called \emph{holomorphic} if $\alpha(V)$ is a holomorphic section of $\T^{1,0}M$.

\begin{definition} \label{def: real-tangential}
    Let $\tau \colon M \to M$ be an antiholomorphic involution and $N$ be the fixed point set of $\tau$, which is a totally real submanifold. 
    A holomorphic function $f \in \holo(M)$ is called \emph{$\tau$-compatible}  if $\tau^\ast f= \overline{f}$.  
    A holomorphic vector field $V$ on $M$ is \emph{$\tau$-compatible} 
    if $\tau_\ast V = \overline{V}$.
\end{definition}

A $\tau$-compatible function is real-valued on the fixed point set $N$. In the Hamiltonian setting $\tau$-compatible functions and $\tau$-compatible vector fields are closely related, cf.\ Lemma \ref{lem:Rholo-vs-Rtang}.

\begin{lemma} \label{lem:realFLOWtang}
    Let $V$ be a holomorphic vector field on $M$ with $(N, \tau)$ as above and $N \neq \emptyset$. The vector field $V$ is $\tau$-compatible if and only if $\alpha^{-1}(V_p) = 2 \re (V_p) \in \T_p N$ for all $p \in N$. In particular, the $\RR$-flow of a $\tau$-compatible vector field preserves the submanifold $N$. 
\end{lemma}
\begin{proof}
Let $x$ be a point in $N$. Consider the flow equation of $V$
\begin{align*}
    V(x) = \restr{\frac{\partial}{\partial t} \Phi(t,x) }{t=0}
\end{align*}
Under $\tau_\ast$ it becomes
\begin{align*}
    \overline{V}(x) = \tau_\ast V(x) = \restr{\frac{\partial}{\partial t} \tau \circ \Phi(t,x) }{t=0}
\end{align*}
Adding these two equations and writing the complex time as $t = u + iv$ yields
\begin{align*}
    2 \re V(x) = \restr{\frac{\partial}{\partial t} (\id + \tau) \circ \Phi(t,x) }{t=0} = \restr{\frac{1}{2}\left(\frac{\partial}{\partial u} - i \frac{\partial}{\partial v}\right) (\id + \tau) \circ \Phi(t,x) }{t=0}
\end{align*}
When only flow in real time is considered
\begin{align*}
    2 \re V(x) = \restr{\frac{1}{2} \frac{\partial}{\partial u} (\id + \tau) \circ \Phi(u,x) }{u=0} 
\end{align*}
On the other hand, $2 \re V(x) = \alpha^{-1}(V)$ is the corresponding smooth vector field for $V$. The flow equation for this smooth vector field in real time is
\begin{align*}
    2 \re V(x) = \restr{\frac{\partial}{\partial u} \phi(u,x) }{u=0} 
\end{align*}
where $\phi(u, x) = \Phi(j(u), x)$ with the embedding $j \colon \RR \to \CC, t \mapsto (t, 0)$. Therefore $\tau \circ \Phi(t, x) = \Phi(t, x)$ for $t \in \RR$ and $x \in N$ as well as $\re V(x) \in \T_x N$. 

Backtracking the argument above and using $\RR$ totally real in $\CC$ shows $\tau_\ast V = \overline{V}$ on $N$. Since $N = \mathrm{Fix}(\tau) \neq \emptyset$ has real dimension $n = \dim_\C M$ by \cite[Proposition 1.1]{MR0589903}, it is totally real of maximal dimension in $M$. Then the holomorphicity of $V - \overline{\tau_\ast V}$ implies $\tau_\ast V = \overline{V}$ on $M$. 
\end{proof}

\subsection{Complexification}
We need the notion of complexification in the symplectic context. 

\begin{definition}{\cite{MR4423269}} \label{definition: realform}
Let $(M_\RR, \omega_\RR)$ be a smooth symplectic manifold of dimension $2n, \, n \ge 1$. A \emph{symplectic complexification} $(M, \omega, \tau)$ of $(M_\RR, \omega_\RR)$ is a holomorphic symplectic manifold $M$ of complex dimension $2n$, together with a holomorphic symplectic form $\omega$, an antiholomorphic involution $\tau \colon M \to M$ and a smooth map $\jmath \colon M_\RR \to M$, satisfying
\begin{enumerate}[label=(\roman*)]
	\item		The map  $\jmath$ is a proper embedding. 
	\item		The image of $M_\RR$ is the fixed-point set of $\tau$.
	\item		The pullback $\jmath^\ast \omega$ coincides with $\omega_\RR$. 	
	\item		The pullback $\tau^\ast \omega$ is the complex-conjugated $\overline{\omega}$.
\end{enumerate}
\end{definition}

\begin{remark}
    (a) Property (i) implies that $\jmath(M_\RR)$ is closed in $M$, thus we may identify $M_\RR$ with $\jmath(M_\RR)$ as a totally real submanifold of maximal dimension in $M$. 

    (b) We also refer to $M_\RR$ as the \emph{real form} of $M$ (with respect to $\tau$).
\end{remark}

 Let $\aut_{\omega}(M)$ be the group of holomorphic symplectic automorphisms of $M$ and $\diff_{\omega_\RR}(M_\RR)$ the group of symplectic diffeomorphisms from $M_\RR$ onto itself. Also denote by $\taut_{\omega} (M)$ the group of holomorphic symplectic automorphisms of $M$ that preserve the real form, i.e.
 \[	\taut_{\omega} (M) = \{ \Phi \in \aut_{\omega}(M) : \Phi(M_\RR ) = M_\RR \}
 \]
By property (iii), $\restr{\Phi}{ M_\RR}$ lies in $\diff_{\omega_\RR}(M_\RR)$. 
Moreover, (iii) implies the real parts of $\tau$-compatible (cf.\ Definition \ref{def: real-tangential}) $\omega$-symplectic vector fields $V,W$ are real $\omega_\RR$-symplectic on $\camo^\RR$:
\begin{align*}
    \omega(V, W) = \jmath^\ast \omega ( \alpha^{-1}(V), \alpha^{-1}(W)) = \omega_\RR (\alpha^{-1}(V), \alpha^{-1}(W))
\end{align*}
where $\alpha \colon \T M \to \T^{1,0} M$ is the $\RR$-isomorphism connecting smooth vector fields with holomorphic vector fields.  

Let $H \in \holo(M)$ be a holomorphic function and $V$ the holomorphic Hamiltonian vector field associated to $H$, namely $\dif H = i_V \omega$. One may compare the following lemma with \cite[Theorem 1.3]{arathoon2023real}.
\begin{lemma} \label{lem:Rholo-vs-Rtang}
    Under the settings of Definition \ref{definition: realform}, $H$ is $\tau$-compatible up to a constant if and only if $V$ is $\tau$-compatible. 
\end{lemma}
\begin{proof}
    Applying $\tau^\ast$ to $\dif H = i_V \omega$ yields   
\begin{align*}
    \dif \tau^\ast H = \tau^\ast \dif H = \tau^\ast (i_V \omega) = i_{\tau^\ast V} \overline{\omega}
\end{align*}
where the last equality follows from (iv).
\end{proof} 

As at the beginning of Section \ref{hamdiffeo}, the notion of Hamiltonian isotopy can be defined for holomorphic symplectic manifold as well. If such a holomorphic isotopy $\Phi_t$ is in $\taut_{\omega}(M)$ for every $t$, then their restriction $\restr{\Phi_t}{M_\RR} = \jmath^\ast \Phi_t$ is a Hamiltonian isotopy on the real form $(M_\RR, \omega_\RR)$ because $\tau$-compatible Hamiltonian vector fields have $\tau$-compatible Hamiltonian functions.

\smallskip

\section{The real Calogero--Moser space} \label{section: realform}
Now, we choose a real form for the Calogero--Moser space $\camo$ by taking the antiholomorphic involution $\tau$ on $\mathcal{M} = \matnn \oplus \matnn \oplus \CC^n \oplus (\CC^n)^\ast$ 
\begin{align*}
	\tau (X, Y, v, w ) = ( X^*,  Y^*, i w^*, i v^* )
\end{align*}
 The fixed-point set of $\tau$ is
\begin{align*}
	\mabove^{\tau}= \{ (X, Y, v, w) \in \mabove : X^* = X, Y^* = Y, v = i w^* \}
\end{align*}
In particular, the first two components consist of Hermitian matrices. 

It is straightforward to check for $g \in \gl_n(\CC)$,
\begin{align} \label{equation: tau-orbit}
	\tau ( g \cdot (X, Y, v, w) ) = g' \cdot \tau (X, Y, v, w)
\end{align}
where $g' = (g^\ast)^{-1}$. Thus the $\gl_n(\CC)$-orbit through $z \in \mabove$ is being taken to an orbit through $\tau(z)$. On the other hand, if $(X, Y, v, w)$ lies in $\mu^{-1} ( i I_n )$ then $\tau (X, Y, v, w)$ is contained in $\mu^{-1}( i I_n)$
\[
	[ X^\ast, Y^\ast ] + (i w^\ast) (i v^\ast) = X^\ast Y^\ast - Y^\ast X^\ast - w^\ast v^\ast =  i I_n
\] 
Therefore $\tau$ induces an antiholomorphic involution on $\camo$, which we also denote by $\tau$. 

On the other hand, the original setup in \cite{MR478225} is compatible with this choice of $\tau$. The real Calogero--Moser space was constructed as symplectic reduction over the unitary group in \cite[pp. 491--494]{MR478225}, starting with 
\begin{align*}
	\mabove_h = \mathfrak{h}(n) \oplus \mathfrak{h} (n) \oplus \CC^n
\end{align*}
where $\mathfrak{h}(n)$ denotes the $\RR$-vector space of Hermitian square matrices of size $n$. As a real vector space, $\mabove_h$ is of dimension $2n(n+1)$ and is isomorphic to $\mabove^{\tau} \subset \mabove$ under $(A, B, a) \mapsto ( A, B, a, i a^*)$.

The unitary group $\mathrm{U}(n)$ acts on $\mabove_h$ as
\begin{align*}
	u \cdot (A, B, a ) = ( u A u^*, u B u^*, u a ), \, \, u \in \mathrm{U}(n)
\end{align*}
which is the restriction of the $\gl_n (\CC)$-action on $ \mabove^{\tau} \cong \mabove_h$. 

Let $\mathfrak{u}(n)$ be the Lie algebra of the unitary group $\mathrm{U}(n)$, which consists of skew-Hermitian matrices. Take the real moment map 
\begin{align*}
	\mu_\RR \colon  \mabove_h \to \mathfrak{u}(n), \quad
	(A, B, a) \mapsto [A, B] + i a a^*
\end{align*}
which is the restriction of $\mu$ on $\mabove^{\tau}$. Denote by $\omega_\RR$ the pullback of the holomorphic form $\omega$ to $\mabove^{\tau}$ under the inclusion $\mabove^{\tau} \hookrightarrow \mabove$, which is a real symplectic form in the following real coordinates on $\mabove^\tau$
\begin{align*}
	&X_{jj}, Y_{jj}, \re (v_j),  \im (v_j), \ j = 1, \dots, n, \\
	&\re (X_{jk}),  \re (Y_{kj}),  \mathrm{Im} (X_{jk}),  \im (Y_{kj}), \ j < k
\end{align*}
with $j < k$. On the fixed-point set $\mabove^\tau$ 
\[
	\dif v_j \wedge \dif w_j = i \, \dif v_j \wedge  \dif \overline{v_j} = 2 \, \dif  \re (v_j) \wedge \dif \im (v_j)
\]
Then for $( A, B, a, i a^\ast) \in \mabove^\tau$ the real symplectic form $\omega_\RR$ is
\begin{align*}
	 \omega_\RR = 
	& \sum_j \left( \dif A_{jj} \wedge \dif B_{jj} + 2  \dif \re(a_j) \wedge \dif \im (a_j) \right) \\
	&+ 2 \sum_{j < k} \left( \dif \re (A_{jk}) \wedge \dif \re (B_{kj})  - \dif \im (A_{jk}) \wedge \dif \im (B_{kj}) \right)  \nonumber
\end{align*}
The $\gl_n(\CC)$-invariance of $\omega$ implies the $\un(n)$-invariance of $\omega_\RR$, which induces a real symplectic form on the quotient over $\un(n)$. 
With the symplectic form $\omega_\RR$ and the moment map $\mu_\RR$, we have the following. 

\begin{definition} \label{definition: realCM}
	The \emph{real Calogero--Moser space} $(\camo^\RR, \omega_\RR)$ is the real symplectic reduction $\mu_\RR^{-1} ( i I_n) / \un (n)$. 
\end{definition}

\begin{lemma} \label{lemma: symp=ham}
	The real Calogero--Moser space $\camo^\RR$ is simply connected.
\end{lemma}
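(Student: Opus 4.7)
The plan is to reproduce the classical Kazhdan--Kostant--Sternberg reduction explicitly and exhibit $\camo^\RR$ as a contractible manifold. The first step is to show that on the level set $\mu_\RR^{-1}(iI_n)$ the Hermitian matrix $A$ necessarily has simple spectrum. Indeed, suppose $v_1,v_2$ are orthonormal eigenvectors of $A$ with a common eigenvalue $\lambda$. Then $v_j^*[A,B]v_j=0$, so pairing the moment map equation against $v_j$ forces $|v_j^*a|^2=1$ for $j=1,2$, while pairing against $v_1,v_2$ off-diagonally gives $(v_1^*a)(a^*v_2)=0$; the two conclusions are incompatible.

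Next, use the $\un(n)$-action to diagonalize $A$ as $\mathrm{diag}(q_1,\ldots,q_n)$ with distinct real eigenvalues. The off-diagonal part of $[A,B]+iaa^*=iI_n$ then forces
\[
    B_{jk}=\frac{-i\,a_j\bar a_k}{q_j-q_k}\qquad(j\ne k),
\]
while the diagonal yields $|a_j|^2=1$. The diagonal entries $p_j:=B_{jj}\in\RR$ are free. The residual symmetry is the maximal torus $\un(1)^n\subset\un(n)$ acting by $(a_1,\ldots,a_n)\mapsto(u_1 a_1,\ldots,u_n a_n)$, together with the Weyl group $S_n$ permuting the indices. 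Slice the torus action by the global section $a_j=1$. This produces a diffeomorphism
\[
    \camo^\RR\;\cong\;\bigl\{(q,p)\in\RR^n\times\RR^n:\,q_1,\ldots,q_n\text{ pairwise distinct}\bigr\}\big/S_n,
\]
with $S_n$ acting diagonally on the $q$'s and $p$'s.

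The distinct-configuration space $\{q\in\RR^n:q_i\ne q_j\}$ is the disjoint union of $n!$ open convex Weyl chambers, and $S_n$ permutes these chambers freely and transitively. Thus the quotient is diffeomorphic to a single Weyl chamber $\{q_1<q_2<\cdots<q_n\}$, which is a convex open cone in $\RR^n$. Crossing with the unconstrained momentum factor $\RR^n$ shows that $\camo^\RR$ is diffeomorphic to an open convex subset of $\RR^{2n}$, hence contractible and in particular simply connected.

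The main obstacle is not topological but analytic: one must verify that the slice-plus-spectral coordinates really yield a smooth global diffeomorphism, not just a set-theoretic bijection. This amounts to checking that the slice $\{a=(1,\ldots,1)\}$ intersects each $\un(1)^n$-orbit transversally in a single point and that the $S_n$-quotient is proper and free, both of which are standard consequences of the simple-spectrum property and of the freeness of the $\gl_n(\CC)$-action on $\mu^{-1}(iI_n)$ already recorded in Wilson \cite{MR1626461}.
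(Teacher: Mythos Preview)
Your proof is correct and follows essentially the same route as the paper: both carry out the Kazhdan--Kostant--Sternberg reduction explicitly, diagonalize $A$, normalize $a$ to $(1,\dots,1)$ via the diagonal torus, read off $B_{jk}=-i/(q_j-q_k)$ and free diagonal momenta, and conclude that $\camo^\RR$ is diffeomorphic to a Weyl chamber times $\RR^n$. The only cosmetic differences are that you prove simple spectrum by an eigenvector pairing argument before diagonalizing (the paper deduces $x_j\neq x_k$ afterwards from the off-diagonal equation) and you treat the $S_n$-quotient explicitly, whereas the paper imposes the ordering $x_1>\cdots>x_n$ from the outset.
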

\begin{proof}
	Since any Hermitian matrix is diagonalizable by conjugation of the unitary group, we can consider $A$ in diagonal form with decreasing diagonal entries $x_1, x_2, \dots, x_n$. 
	Recall that $(A, B, a) \in \caboveFR$ satisfies $[A, B] + iaa^\ast = i I_n$. As the commutator has zeros on the diagonal, $a$ is of the form $(e^{i \theta_1}, \dots, e^{i \theta_n})$. Use 
\[	u = \mathrm{diag}(e^{-i \theta_1}, \dots, e^{-i \theta_n}) 	
\]
to reduce all components of $a$ to be one. Then the off-diagonal entry $b_{jk}$ of $B$ satisfies  
\[ b_{jk} (x_j - x_k) + i = 0   \]
making $x_j \neq x_k$ and $b_{jk}= -i / (x_j - x_k)$,  
while the real diagonal entries $y_1, \dots, y_n$ of the Hermitian matrix $B$ remain free. We have
\[
	(x_1, \dots, x_n, y_1, \dots, y_n)
\]
with $x_1 > x_2 > \dots > x_n$ is a system of global coordinates for $\camo^\RR$, which is thus diffeomorphic to $\RR^{2n}$. 
\end{proof}

With the notation $\caboveFR = \mu_\RR^{-1} ( i I_n), \, \caboveF = \mu^{-1} ( i I_n)$ we have
\[
\begin{CD}
	\caboveF @>\tau>>	 \caboveF \\
	@VVV 			 @VVV\\
	\camo	@>\tau>>	\camo
\end{CD}
\quad \quad \quad
\begin{CD}
	\caboveFR @>j>>	 \caboveF \\
	@VVV 			 @VVV\\
	\camo^\RR	@>\jmath>>	\camo
\end{CD}
\]
The map $\jmath$ takes an orbit $\un(n) \cdot z$ with $z \in \caboveFR$ to the orbit $\gl_n(\CC) \cdot z$, and $j$ is the inclusion map.  
The first diagram commutes due to Equation \eqref{equation: tau-orbit}, while the second commutes naturally. To simplify the notation we use $\un = \mathrm{U}(n)$ and $\gln = \gl_n(\CC)$.

\begin{remark}
	In general, see \cite[Remark 2.1]{arathoon2023real}, the real quotient space is only the union of some components of the conjugation-fixed part of the complex quotient. 
	For the Calogero--Moser space, we have the simple situation $\jmath(\camo^\RR) = \camo^\tau$. 
\end{remark}

\begin{proof}[Proof of Theorem \ref{theorem: realform}]
We verify the conditions of Definition \ref{definition: realform}. Condition (iii) follows from the definition of $\omega_\RR$ as the pullback of $\omega$ under $\jmath$. 

For (iv), the pullback $\tau^* \omega$ is the complex conjugated form
\begin{align*}
	\tau^* \omega &= \tau^* \tr (\dif X \wedge \dif Y + \dif v \wedge \dif w)  \\
	&= \tr ( \dif X^* \wedge \dif Y^* + i^2 \dif w^* \wedge \dif v^*)  \\
	&= \tr ( \dif \overline{X} \wedge \dif \overline{Y} + \dif \overline{v} \wedge \dif \overline{w} )  \\
	&= \overline{\omega}
\end{align*}

To show (ii) that $\jmath(\camo^\RR)$ is the fixed-point set of $\tau$, let $g \in \gln, z \in \caboveFR$. By Equation \eqref{equation: tau-orbit}
\[	\tau ( g \cdot z ) = g' \cdot \tau(z) = g' \cdot z \in \gln \cdot z
\] 
thus the orbit $\gln \cdot z$ is stable under $\tau$ and  $\jmath(\camo^\RR)$ is a subset of $\camo^\tau$. 

Conversely, we show that when a $\gln$-orbit in $\caboveF$ is stable under $\tau$, then it meets $\caboveFR$ at one $\un$-orbit. 
Let $z_0$ be a point in a $\tau$-stable $\gln$-orbit. Take the $\un$-invariant function
\[
	p_z \colon \gln \to \RR_{\ge 0}, \quad  g \mapsto \| g \cdot z \|^2
\]
where 
\begin{align*}
	\| (X, Y, v, w) \|^2 &= \| X \|^2 + \| Y \|^2 + \|v \|^2 + \| w \|^2 \\
	&=  \tr X X^\ast + \tr Y Y^\ast + \tr v v^\ast + \tr w^\ast w
\end{align*}
Clearly $ \| \tau(z) \| = \| z\|$. By geometric invariant theory, see \cite[\S 3.1]{MR1711344}, $p_z$ is convex on the double coset $\un \backslash \gln / \gln_z$ and attains minimum exactly when $\mu_1(g \cdot z) = 0$, where 
\[
	\mu_1 (X, Y, v, w) = \frac{1}{2} \{ [X, X^\ast] + [Y, Y^\ast] + v v^\ast - w^\ast w \}
\]
Since the $\gln$-action is free, the isotropy group $\gln_z$ is trivial. 
Recall from \cite[\S 8]{MR1626461} that the complex space $\camo$ is homeomorphic to the hyperk\"ahler quotient
\[
	\camo = \mu^{-1}(i I_n) / \gln  \cong ( \mu_1^{-1} (0) \cap \mu^{-1} ( i I_n ) ) / \un
\]
Thus each orbit $\gln \cdot z$ meets the $\un$-stable set $\mu_1^{-1}(0)$ at exactly one $\un$-orbit, where $p_z$ attains minimum. 

By $\caboveFR \subset \mu_1^{-1}(0)$, we consider $z_0 = (X, Y, v, w)  \in  \mu_1^{-1} (0) \cap \caboveF$. Since the orbit $\gln \cdot z_0$ is $\tau$-stable, there exists $h_0 \in \gln$ such that 
\[
	\tau (z_0) = h_0 \cdot z_0
\]
By the convexity of $p_{z_0}$ on $\un \backslash \gln$, $\mu_1(z_0) = 0$, and $ \| \tau(z_0) \| = \| z_0\|$, it follows that $h_0 \in \un$. Then 
\[
	z_0 = \tau (\tau (z_0)) = \tau (h_0 \cdot z_0) = h_0 \cdot \tau (z_0) = h_0^2 \cdot z_0
\]
where the third equality is due to Equation \eqref{equation: tau-orbit} and $h_0' = (h_0^\ast)^{-1} = h_0$. Recall that if $g \in \gln$ fixes a point $z \in \caboveF$, then $g$ is the identity \cite[Corollary 1.4]{MR1626461}. This implies that $h_0^2 = I_n$. In combination with $h_0 \in \un$, we see that $h_0$ is also Hermitian. Hence $h_0$ is of the form $u D u^\ast$ for some $u \in \un$, and $D = \mathrm{diag} (d_1, \dots, d_n)$ with entries either $1$ or $-1$. Replacing $z_0$ by $u^\ast \cdot z_0$, we may assume that $h_0 = D$. Then from $\tau(z_0) = D \cdot z_0$ we have 
\[
	X^\ast = D X D, \quad Y^\ast = D Y D
\]
which implies that $X$ and $Y$ are normal. Hence there exists $u_1 \in \un$ which diagonalizes $X$.  

Move to the point $z_1 = u_1 \cdot z_0 = (X_1, Y_1, v_1, w_1)$ on the same $\un$-orbit. Because $z_1 \in \caboveF$
\begin{align} \label{equation: z1rankcond}
	i I_n = [X_1, Y_1] + v_1 w_1 = [X_1, Y_1] + i v_1 v_1^\ast u_1 D u_1^\ast 
\end{align}
where the second equality follows from $h_1 = u_1 D u_1^\ast$ and
\[
	\tau(z_1) = h_1 \cdot z_1 \implies i v_1^\ast = w_1  u_1 D u_1^\ast
\]
Thanks to $X_1$ being diagonal, the commutator $[X_1, Y_1]$ has zeros on the diagonal. Comparing the $j$-th diagonal entry of Equation \eqref{equation: z1rankcond} yields  
\[
	i = i \, d_j \lvert (u_1^\ast v_1 )_j \rvert^2
\]
which implies that $d_j = 1$. Therefore, $h_0 = I_n$ and $\tau(z_0) = z_0$. Since $\tau$ commutes with the $\un$-action, the entire orbit $\un \cdot z_0$ is contained in $\caboveFR$. This shows that $\caboveF^\tau \subset \jmath(\camo^\RR)$. 

For (i): The injectivity of $\jmath \colon \camo^\RR \to \camo$ is equivalent to that each $\gln$-orbit through a point $z \in \caboveF^\tau$ meets $\caboveFR$ at exactly one $\un$-orbit. By Equation \eqref{equation: tau-orbit}, this condition implies that the $\gln$-orbit is $\tau$-stable. Thus the injectivity follows from the above discussion, but we also give a direct proof: 
We show that for $z_1, z_2  \in \caboveFR$ if their $\un$-orbits have empty intersection, then their $\gln$-orbits have empty intersection. Suppose otherwise, that $g \cdot z_1 = z_2$ for some $g \in \gln$. Equation \eqref{equation: tau-orbit} and $\tau$-invariance yield $g' \cdot z_1 = z_2$. By \cite[Corollary 1.4]{MR1626461}, $g^{-1} g' = I_n$. It follows that $g \in \un$ and $\jmath$ is injective. Note that in the more general setting of \cite[Theorem 3.2]{arathoon2023real}, the injectivity of $\jmath$ is also a consequence of the free $\gln$-action on $\caboveFR$.

To check that $\jmath$ is an immersion, we refer to \cite[Theorem 3.2]{arathoon2023real}. 

The properness of $\jmath$ follows from $\camo^\RR = \camo \cap \RR^q	$, see Remark \ref{remark: tauCq}. 
This shows (i) of Definition \ref{definition: realform}. 
\end{proof}

\section{$\tau$-symplectic density property} \label{section: tangencyfunction}
It is well-known that instead of the moment map $\mu \colon \mabove \to \matnn$ and the preimage $\mu^{-1}(i I_n)$ of a coadjoint-invariant point, it is equivalent to consider the moment map 
\[	\hat{\mu} \colon \matnn \oplus \matnn \to \mathfrak{sl}_n, \quad (X,Y) \mapsto [X,Y]		
\]
and take the complex Calogero--Moser space as the symplectic reduction $\hat{\mu}^{-1}( \gln \cdot \xi) / \gln$ along a coadjoint orbit $\gln \cdot \xi $, where any off-diagonal entry of $\xi$ is $-i$ and diagonal entries $0$. For the real counterpart, the real Calogero--Moser space $\camo^\RR$ is the quotient $\hat{\mu}_\RR^{-1} ( \un \cdot \xi ) / \un$ along the coadjoint orbit $\un \cdot \xi$ where the real moment map is the restriction of $\hat{\mu}$
	\[
		\hat{\mu}_\RR \colon \mathfrak{h}(n) \oplus \mathfrak{h}(n)  \to  \mathfrak{u}(n), \quad (A, B) \mapsto [A, B]
	\]
Let 
\begin{align*}
	\cabove = \hat{\mu}^{-1} ( i I_n) \subset \matnn \oplus \matnn, \quad \caboveR = \hat{\mu}_\RR^{-1} ( i I_n) \subset \mathfrak{h}(n) \oplus \mathfrak{h}(n)
\end{align*}
Then $\cabove$ is the variety of $\matnn \oplus \matnn \cong \CC^{2n^2}$ consisting of a pair of matrices $(X,Y)$ such that $\rank ([X,Y]- i I_n ) = 1$. We also take the conjugation $\tau$ on $\cabove$ as 
$\tau(X, Y) = (X^\ast, Y^\ast)$. This induces a conjugation on the $\CC$-algebra $\CC[\cabove]$ of regular functions such that on scalars it is the complex conjugation. As mentioned in Section \ref{section: prelim}, of special interest are the $\tau$-compatible functions, namely holomorphic functions $F \colon \cabove \to \CC$ satisfying $F \circ \tau = \overline{ F }$, which are in particular real-valued on the real form.

\begin{lemma} \label{lemma: realgenerators}
The $\RR$-algebra of real-valued $\un$-invariant algebraic functions on $\caboveR$ can be generated by $ \{ \tr A^j B^k: j+k \le n^2 \}$.
\end{lemma}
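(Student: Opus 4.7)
The plan is to transfer the statement to the complex variety $\cabove$ via the conjugation $\tau$ and then combine the first fundamental theorem of matrix invariant theory with a reduction specific to the commutator relation defining $\cabove$.

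First, since $\caboveR$ is the $\tau$-fixed totally real form of $\cabove$ and $\un(n)$ is the compact real form of $\gl_n(\CC)$, a $\un(n)$-invariant real-valued algebraic function on $\caboveR$ extends uniquely to a $\gl_n(\CC)$-invariant regular function $F$ on $\cabove$ satisfying $F\circ\tau = \overline F$, and conversely every such $F$ restricts to a real-valued $\un(n)$-invariant on $\caboveR$. Each candidate generator $\tr X^j Y^k$ is $\tau$-equivariant in this sense: by cyclicity of the trace,
\[
    \tau^\ast \bigl(\tr X^j Y^k\bigr)(X,Y) = \tr\bigl((X^\ast)^j(Y^\ast)^k\bigr) = \overline{\tr(Y^k X^j)} = \overline{\tr(X^j Y^k)}.
\]
Hence it suffices to show that $\CC[\cabove]^{\gl_n(\CC)}$ is generated as a $\CC$-algebra by $\{\tr X^j Y^k : j+k \le n^2\}$.

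To achieve this, I would first invoke the first fundamental theorem of invariant theory of Procesi and Razmyslov for $\gl_n(\CC)$ acting by simultaneous conjugation on $\matnn\oplus\matnn$: the ring $\CC[\matnn^{\oplus 2}]^{\gl_n(\CC)}$ is generated by traces of monomial words in $X$ and $Y$ of total length at most $n^2$. It then remains to rewrite each such word-trace, modulo the ideal defining $\cabove$, as a polynomial in the traces $\tr X^j Y^k$. The key ingredient is the defining relation $[X,Y] = iI_n - R$ with $\rank R \le 1$, equivalently $YX = XY - iI_n + R$. Moving every $Y$ past every $X$ in a general word yields a leading term of the form $X^{a} Y^{b}$ together with correction traces containing insertions of $R$; applying the identity
\[
    \tr(R\,M) = \tr\bigl([X,Y]\,M\bigr) - i\,\tr(M)
\]
together with cyclicity, these correction traces reduce inductively to polynomials in $\{\tr X^j Y^k\}$ of strictly smaller total length, preserving the Procesi--Razmyslov bound $j+k \le n^2$.

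I expect the main obstacle to be this inductive reduction: although each application of $YX = XY - iI_n + R$ unshuffles the word by one, every insertion of $R$ creates fresh traces that must themselves be expanded, and one must verify that the recursion terminates strictly within the subalgebra generated by $\{\tr X^j Y^k\}$ while respecting the degree bound. A cleaner alternative I would consider is to use Lemma \ref{lemma: symp=ham}: the quotient $\caboveR/\un(n)$ is diffeomorphic to $\RR^{2n}$ via ordered eigenvalue/diagonal coordinates $(x_1,\dots,x_n,y_1,\dots,y_n)$, so $\un(n)$-invariant polynomial functions on $\caboveR$ correspond to $S_n$-invariant polynomials in $(x,y)$; the classical polarization theorem then generates this ring by polarized power sums $\sum_i x_i^j y_i^k$, which are, up to strictly lower-order corrections in $y$, the restrictions of $\tr A^j B^k$.
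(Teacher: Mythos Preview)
Your overall architecture---extend a real $\un(n)$-invariant to a $\tau$-equivariant $\gl_n(\CC)$-invariant on $\cabove$, then reduce the lemma to the statement that $\CC[\cabove]^{\gl_n(\CC)}$ is generated by $\{\tr X^jY^k : j+k\le n^2\}$---is exactly what the paper does. The difference is that the paper does not attempt to prove this generation statement: it is quoted directly from Etingof--Ginzburg \cite{MR1881922}*{\S 11, p.~322, Remark (ii)}. Once that is granted, writing $F=F_1+iF_2$ with $F_1,F_2$ real $\RR$-combinations of the generators and observing that the generators are $\tau$-real (your displayed computation) finishes the proof in one line.

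Your attempt to reprove the Etingof--Ginzburg fact from Procesi--Razmyslov has a genuine gap, and it is precisely the one you flagged. The identity you propose to use, $\tr(R\,M)=\tr([X,Y]\,M)-i\,\tr(M)$, is tautological in this context: substituting it back into the correction term from a single swap $YX\mapsto XY-iI_n-R$ and using cyclicity returns the original trace unchanged, so no length drop occurs. The point is that replacing $R$ by $[X,Y]-iI_n$ reinserts a word of the \emph{same} length $\ell$, not a shorter one. What makes the reduction work in Etingof--Ginzburg is a genuinely different use of the rank-one condition, for instance identities of the type $R\,M\,R=\tr(RM)\,R$ (valid for any rank-one $R$), which allow one to factor traces with several $R$-insertions into products of shorter traces; a single $R$-insertion cannot be simplified by your rule alone. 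So the induction as you describe it does not terminate inside the subalgebra generated by $\{\tr X^jY^k\}$.

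Your alternative via Lemma~\ref{lemma: symp=ham} also has a gap: in the diagonal slice the off-diagonal entries $b_{jk}=-i/(x_j-x_k)$ are rational in the $x_i$, so $\tr A^jB^k$ restricts to a rational function on the chamber, not a polynomial, and the identification of algebraic $\un$-invariants with $S_n$-invariant polynomials in $(x,y)$ is not the clean polarization picture you invoke.
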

\begin{proof}
	Let $f$ be a real-valued $\un$-invariant algebraic function on $\caboveR$. Then $f$ is a real polynomial in the entries of $X$ and $Y$. Extend $f$ to a real complex algebraic function $F$ on $\cabove$
	\[	(X, Y) \mapsto  \frac{1}{2} ( f(X, Y) + \overline{ f (\tau(X,Y))} )
	\] 
	The unitary group $\un$ is a totally real submanifold of maximal dimension in $\gln$ and elements of $ \CC[\cabove]$ are holomorphic, hence $F$ is $\gln$-invariant. By Etingof and Ginzburg \cite[\S 11, p.~322, Remark (ii)]{MR1881922}, the $\CC$-algebra $\CC[\cabove]^{\gln}$ of $\gln$-invariants on $\cabove$ can be generated by $\{ \tr X^j Y^k: j+k \le n^2 \}$. Write $F$ in terms of these generators over $\CC$. Separating the scalars into real and imaginary parts and collecting terms, we get $F = F_1 + i F_2$, where $F_l,  l = 1, 2$ are in the $\RR$-algebra generated by these trace functions. From Equation \eqref{equation: tau-equiv}, the generators $ \tr X^j Y^k$ are real, thus $F_l$ is real-valued on $\camo^\RR$. Thus $f = \restr{F}{\camo^\RR} = \restr{F_1}{\camo^\RR}$. 
\end{proof}

\begin{remark} \label{remark: tauCq}
Since the $\CC$-algebra $\CC[\cabove]^{\gln}$ of $\gln$-invariants on $\cabove$ can be generated by $\{ \tr X^j Y^k: j+k \le n^2\}$, we choose a minimal generating set $\mathcal{G}$. Denote by $q$ the cardinality of $\mathcal{G}$, then the Calogero--Moser space $\camo$ is a smooth affine variety in 
\[	\CC^q \cong \spec \CC [\mathcal{G}]
\]
On the generating set $\mathcal{G}$, the conjugation $ \tau (X, Y) = ( X^\ast, Y^\ast ) $ on $ \cabove $ acts as complex conjugation due to the cyclicity of the trace
\begin{align} \label{equation: tau-equiv}
	\tau^\ast ( \tr X^j Y^k ) = \tr (X^\ast)^j (Y^\ast)^k = \tr \overline{Y}^k \overline{X}^j = \overline{\tr X^j Y^k}
\end{align}
Hence $\tau$  descends to the complex conjugate on the coordinates of $\CC^q$. The real Calogero--Moser space $\camo^\RR$, being the fixed-point set of $\tau$ in $\camo$, is indeed the intersection of $\camo$ with $\RR^q$. This also shows that $\camo^\RR$ is totally real in $\camo$. 
\end{remark}

\begin{proposition} \label{proposition: realADP}
    On the real Calogero--Moser space $( \camo^\RR, \omega_\RR)$, every real algebraic Hamiltonian vector field can be written as real Lie combination of complete algebraic Hamiltonian vector fields, each of them associated to a $\gln$-invariant function from
    \[  \mathcal{F} = \{  \tr Y, \tr Y^2, \tr X^3, (\tr X)^2 \}. \]
\end{proposition}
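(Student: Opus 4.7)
My plan is to reduce the statement to its complex counterpart, essentially the symplectic density property proven in \cite{CaloSymplo}, by extending $h$ to a real holomorphic function on $\camo$, invoking the decomposition there, restricting back to $\camo^\RR$, and finally taking real parts. First, I would extend $h$. By assumption $h$ is an algebraic $\un$-invariant real-valued function on $\caboveR$, so by Lemma \ref{lemma: realgenerators} it is a polynomial with real coefficients in $\{\tr A^j B^k : j+k \le n^2\}$. Substituting the complex variables $(X, Y)$ for $(A, B)$ yields an algebraic $\gln$-invariant function $H$ on $\cabove$ that descends to $\camo$. The identity \eqref{equation: tau-equiv} together with the real coefficients gives $H \circ \tau = \overline{H}$, so $H$ is a real holomorphic extension of $h$, and its Hamiltonian vector field $V_H$ is a tangential holomorphic symplectic vector field on $\camo$ whose restriction to $\camo^\RR$ is $V_h$.

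Next, I would invoke the algebraic decomposition of \cite{CaloSymplo}. The proof of the symplectic density property of $\camo$ there in fact provides an exact equality: every algebraic Hamiltonian vector field on $\camo$ is a finite $\CC$-linear combination of iterated Lie brackets of the Hamiltonian vector fields $V_F$, $F \in \mathcal{F}$. Applying this to $V_H$ gives
\[
V_H = \sum_\alpha c_\alpha \Lambda_\alpha, \qquad c_\alpha \in \CC,
\]
where each $\Lambda_\alpha$ is an iterated Lie bracket of elements of $\{V_F : F \in \mathcal{F}\}$.

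Finally, I would restrict to $\camo^\RR$ and take real parts. Each $F \in \mathcal{F}$ is real-valued on Hermitian pairs and therefore real holomorphic on $\camo$, so $V_F$ is tangential to $\camo^\RR$; tangentiality is preserved under Lie brackets, so every $\Lambda_\alpha$ is tangential, and $\lambda_\alpha := \restr{\Lambda_\alpha}{\camo^\RR}$ is an iterated Lie bracket of the restricted complete Hamiltonian vector fields $\restr{V_F}{\camo^\RR}$ on $\camo^\RR$. Restricting the identity above gives $V_h = \sum_\alpha c_\alpha \lambda_\alpha$; since $V_h$ and each $\lambda_\alpha$ are real, the imaginary parts on the right must cancel, leaving the real Lie combination
\[
V_h = \sum_\alpha \re(c_\alpha)\, \lambda_\alpha.
\]
The main obstacle I anticipate is ensuring that the decomposition imported from \cite{CaloSymplo} is an exact equality using only Hamiltonians from the small set $\mathcal{F}$, and not merely a compact-open approximation by a broader class; this precision is the real content of the algebraic construction in \cite{CaloSymplo}, which I would use as a black box.
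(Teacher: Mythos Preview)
Your proposal is correct and follows essentially the same route as the paper: extend the real Hamiltonian to a $\gln$-invariant on $\cabove$ via Lemma~\ref{lemma: realgenerators}, invoke the exact Lie-algebraic decomposition from \cite{CaloSymplo} over $\CC$, and then pass to real parts using that each $F\in\mathcal{F}$ is real-valued on the real form. The only cosmetic difference is that the paper carries out the real-part argument on the level of Hamiltonian \emph{functions} (writing $F=F_1+iF_2$ with $F_1,F_2$ real Lie combinations of $\mathcal{F}$ and observing $F_2|_{\camo^\RR}=0$), whereas you phrase it for the vector fields; working with functions makes the ``imaginary parts cancel'' step slightly cleaner, but the content is identical.
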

\begin{proof}
It suffices to consider the Hamiltonian functions. 
Since $\camo^\RR$ is a symplectic reduction, an algebraic Hamiltonian function $f$ corresponds to a $\un$-invariant $\hat{f}$ on $\caboveR$ and by Lemma \ref{lemma: realgenerators} the algebraic $\un$-invariants as a $\RR$-algebra are generated by $\{ \tr A^j B^k: j+k \le n^2 \}$. The computation in \cite{CaloSymplo} shows that any algebraic $\gln$-invariant is contained in the complex Lie algebra generated by the four functions in $\mathcal{F}$. 
We point out that in \cite{CaloSymplo} all formulae come with real coefficients, where the condition is that $[X,Y] + I_n$ has rank one. However we have a different condition, namely $[X, Y] - i I_n $ has rank one, which introduces an imaginary factor. 

For a real-valued algebraic $\un$-invariant $\hat{f}$ on $\caboveR$, we can first extend it to a function $F$ on $\cabove$ as in the proof of Lemma \ref{lemma: realgenerators}. Then write it as a complex Lie combination of functions in $\mathcal{F}$ and separate it into real and imaginary parts $F = F_1 + i F_2$. Fortunately, since $F$ and the generating functions $\tr X^j Y^k$ are real-valued on $\camo^\RR$, $F_2$ is zero on $\caboveR$. Hence $\hat{f}$ is in the real Lie algebra generated by $\mathcal{F}$. 
\end{proof}

Let $M$ be a complex manifold with a real form $(N, \tau)$. Recall from Lemma \ref{lem:realFLOWtang} that a holomorphic vector field $V$ on $M$ is $\tau$-compatible if $\tau_\ast V = \overline{V}$, which implies $\alpha^{-1}(V_p) = 2 \re (V_p) \in \T_p N$ for all $p \in N$, where
\[
    \alpha \colon \T M \hookrightarrow \T M \otimes_\RR \CC \to \T^{1,0}M, \, V \mapsto \frac{1}{2}( V - i JV)
\]
is the $\RR$-isomorphism between $\T M$ and $\T^{1,0} M$ (Definition \ref{def: real-tangential}). In particular the $\RR$-flow of $V$ preserves the submanifold $N$. In fact, the following version of density property is appropriate for Carleman approximation. 

\begin{definition} \label{def-tangential-dp}
    Let $(M_\RR, \omega_\RR)$ be smooth symplectic manifold and $(M, \omega, \tau)$ a symplectic complexification of $(M_\RR, \omega_\RR)$. We say that $(M, \omega, \tau)$ admits the \emph{$\tau$-symplectic density property}, if the complex Lie algebra generated by $\CC$-complete $\tau$-compatible holomorphic symplectic vector fields is dense in the Lie algebra of holomorphic symplectic vector fields with respect to the compact-open topology. 
\end{definition}

For the purpose of approximation in the next section, it suffices for $\camo$ to have this density property with respect to $\tau$. 

\begin{lemma} \label{lemma: tangentDP}
    The Calogero--Moser space $\camo$ admits the $\tau$-symplectic density property. 
\end{lemma}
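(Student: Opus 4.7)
The plan is to exhibit an explicit finite collection of $\CC$-complete, tangential holomorphic Hamiltonian vector fields whose complex Lie algebra is already dense in the space of holomorphic symplectic vector fields on $\camo$. The natural candidates are the Hamiltonian vector fields $X_F$ for $F \in \mathcal{F} = \{\tr Y,\ \tr Y^2,\ \tr X^3,\ (\tr X)^2\}$, since these already witness the (non-tangential) symplectic density property of $\camo$ established in \cite{CaloSymplo}.

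First I would verify that each $F \in \mathcal{F}$ is a real holomorphic function on $\camo$, i.e.\ $F \circ \tau = \overline{F}$. This is immediate from \eqref{equation: tau-equiv}: each element of $\mathcal{F}$ is a real-coefficient polynomial in the generators $\tr X^j Y^k$, on which $\tau$ acts by complex conjugation; equivalently, each $F \in \mathcal{F}$ is real-valued on $\camo^\RR$, where $X$ and $Y$ are Hermitian. The general principle is then that for any real holomorphic $F$ on $\camo$, the relations $\tau^*\omega = \overline{\omega}$ and $\tau^* dF = d\overline{F}$ force the Hamiltonian vector field $X_F$ to have real holomorphic coefficients, so that $X_F|_{\camo^\RR}$ coincides with the real Hamiltonian vector field of $F|_{\camo^\RR}$ with respect to $\omega_\RR$. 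In particular $X_F$ is tangential to $\camo^\RR$ in the sense of Definition \ref{def-tangential-dp}. Combined with the $\CC$-completeness of each $X_F$ for $F \in \mathcal{F}$ from \cite{CaloSymplo}, I obtain the four complete tangential generators needed.

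Next I would transfer the Lie-algebra computation of \cite{CaloSymplo} to the present setting. Under the identity $[X_f, X_g] = X_{\{f,g\}}$, the complex Lie algebra generated by $\{X_F : F \in \mathcal{F}\}$ corresponds, modulo constants, to the Poisson subalgebra of $\CC[\camo]$ generated by $\mathcal{F}$. The computation cited in the proof of Proposition \ref{proposition: realADP} tells me that this Poisson subalgebra contains every $\tr X^j Y^k$, and hence, since these functions generate $\CC[\camo]^{\gln} = \CC[\camo]$ as a $\CC$-algebra, all algebraic Hamiltonian vector fields arise as elements of this Lie algebra. The main obstacle I anticipate is routine: the usual book-keeping of the imaginary factor arising from our moment-map condition $\mathrm{rank}([X,Y] - i I_n) = 1$ versus the $\mathrm{rank}([X,Y] + I_n) = 1$ used in \cite{CaloSymplo}; this adjustment is already implicit in the proof of Proposition \ref{proposition: realADP} and should transfer verbatim.

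Finally, to pass from algebraic to holomorphic, I would invoke that $\camo \subset \CC^q$ is a smooth closed affine subvariety (Remark \ref{remark: tauCq}), so its algebra of regular functions is $C^k$-dense in $\holo(\camo)$ on compact subsets for every $k$. This yields density of algebraic Hamiltonian vector fields in holomorphic Hamiltonian vector fields in the compact-open topology. Since $\camo$ has vanishing first de Rham cohomology (noted after Definition \ref{definition: SDP}), every holomorphic symplectic vector field on $\camo$ is Hamiltonian, so the Lie algebra generated by $\{X_F : F \in \mathcal{F}\}$ is dense in the holomorphic symplectic vector fields, completing the proof.
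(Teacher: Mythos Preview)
Your proposal is correct and follows essentially the same approach as the paper: the paper's proof is a two-line observation that the Lie generators in $\mathcal{F}$ used to establish the symplectic density property in \cite{CaloSymplo} are already tangential to $\camo^\RR$, so the tangential version follows immediately. You have simply unpacked this argument in full detail, explicitly verifying tangentiality via real holomorphicity of the functions in $\mathcal{F}$ and spelling out the passage from algebraic to holomorphic symplectic vector fields.
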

\begin{proof}
Since the Lie generators in $\mathcal{F}$ are all $\tau$-compatible with respect to $\tau$ by \eqref{equation: tau-equiv}, the claim follows from Lemma \ref{lem:Rholo-vs-Rtang} and the symplectic density property of $\camo$. 
\end{proof}

The following is a parametric approximation on compact subsets for real symplectic vector fields on the real Calogero--Moser space.  

\begin{proposition} \label{proposition: approxRbyC}
    Let $Z$ be a compact subset of $\RR^{N}$ for some natural number $N$, $V_z$ a continuous family of smooth Hamiltonian vector fields on the real Calogero--Moser space $\camo^\RR$. For any $\varepsilon > 0, k \in \mathbb{N}$, and compact $K \subset \camo^\RR$, there exists a continuous family of complete holomorphic Hamiltonian vector fields $W_{z, 1}, \dots, W_{z, m}$ on the complex Calogero--Moser space $\camo$, and a real Lie combination $L( W_{z, 1}, \dots, W_{z, m} )$, all having real-time flows that preserve  $\camo^\RR$, such that
	\[  \| V_z - \alpha^{-1} (L( W_{z, 1}, \dots, W_{z, m} ) ) \|_{C^k(K)} < \varepsilon \]
    where $\alpha \colon \T M \to \T^{1,0}M$ is the $\RR$-isomorphism between $\T M$ and $\T^{1,0} M$.
\end{proposition}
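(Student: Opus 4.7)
The plan is to follow the strategy of \cite{MR4423269}: approximate the smooth family $V_z$ by a continuous family of real algebraic Hamiltonian vector fields on $\camo^\RR$ and then invoke Proposition \ref{proposition: realADP} to express each such field as a real Lie combination in the four generators. Since $\camo^\RR$ is simply connected (Lemma \ref{lemma: symp=ham}), $V_z = X_{h_z}$ for a smooth real Hamiltonian $h_z$ depending continuously on $z$, normalized by fixing its value at a base point. Passing from a Hamiltonian to its vector field loses one derivative, so the approximation will be done at the level of functions in the $C^{k+1}$-norm.

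\textbf{Step 1 (polynomial approximation with parameters).} By Remark \ref{remark: tauCq}, $\camo^\RR = \camo \cap \RR^q$, with the coordinates on $\RR^q$ given by the chosen real trace generators. Extend $h_z$ from a neighborhood of $K$ in $\camo^\RR$ to a continuous $z$-family of smooth real functions on an open subset of $\RR^q$ containing $K$ (via a tubular neighborhood retraction and a partition of unity, uniformly in $z$). Apply a parametric polynomial approximation -- for example, convolution with a fixed smooth approximate identity on $\RR^q$ followed by a truncation of the Taylor expansion -- to obtain a continuous family of real polynomials $f_z$ on $\RR^q$ of uniformly bounded degree $d$ which is $C^{k+1}$-close to this extension on a compact neighborhood of $K$ in $\RR^q$. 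Restricting to $\caboveR$, each $f_z$ is a real algebraic $\un$-invariant and hence descends to a real algebraic Hamiltonian on $\camo^\RR$; its Hamiltonian vector field $X_{f_z}$ is then $C^k$-close to $V_z = X_{h_z}$ on $K$.

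\textbf{Step 2 (Lie combination with continuous coefficients).} Fix a basis $\{f^{(1)}, \dots, f^{(r)}\}$ of the finite-dimensional space $P_d$ of real polynomials of degree $\leq d$ in the $\RR^q$-coordinates, so that $f_z = \sum_i c_i(z) f^{(i)}$ with $c_i(z)$ continuous in $z$. By Proposition \ref{proposition: realADP}, each $X_{f^{(i)}}$ admits a fixed real Lie combination expression $L_i$ in the complete tangential Hamiltonian vector fields $X_{g_1}, \dots, X_{g_4}$ associated to the four functions in $\mathcal{F}$. Then $X_{f_z} = \sum_i c_i(z) L_i$ is again a real Lie combination in these four vector fields, with coefficients continuous in $z$; this furnishes the required $L(W_{z,1}, \dots, W_{z,m})$ with each $W_{z,\ell}$ equal to one of the $X_{g_j}$. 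Completeness of each $X_{g_j}$ is established in \cite{CaloSymplo}, and tangency to $\camo^\RR$ follows from the fact that $g_j$ is a real polynomial in the real trace generators and hence a real holomorphic function in the sense of Section \ref{section: tangencyfunction}.

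The main obstacle is the parametric side of Step 1: ensuring that the extension from $\camo^\RR$ to $\RR^q$ and the polynomial approximation are simultaneously continuous in $z$ and of uniformly bounded degree. This is doable with standard linear approximation operators (mollifier plus Taylor truncation), but the uniform degree bound is what allows Step 2 to reduce to pure linear algebra on the fixed finite-dimensional space $P_d$; once that bound is secured, Step 2 is routine bookkeeping.
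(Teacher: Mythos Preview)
Your proposal is correct and follows essentially the same route as the paper: pass to Hamiltonian functions, approximate in $C^{k+1}$ by real polynomials in the trace coordinates on $\RR^q$, and then invoke Proposition~\ref{proposition: realADP} to write each such polynomial as a real Lie combination of the four generators in $\mathcal{F}$. Your handling of the parametric aspect---securing a uniform degree bound and then reducing to linear algebra on a fixed basis of $P_d$---is in fact more explicit than the paper's argument, which fixes $z$ and leaves the continuous dependence largely implicit.
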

\begin{proof}
Fix $z \in Z$. 
In order to approximate Hamiltonian vector fields in the $C^k$-norm, by $d H =  \omega_\RR ( V_H, \cdot )$ it suffices to approximate the Hamiltonian function $f_z$ of $V_z$ in the $C^{k+1}$-norm.

On the compact subset $K \subset \camo^\RR$ we approximate the smooth function $f_z$ in the $C^{k+1}$-norm by a $\tau$-compatible polynomial $P_z$, using the Weierstrass approximation theorem for compact subsets in $\RR^q$ (see e.g. \cite[\S 1.6.2]{MR0832683}). Since $\camo$ is a submanifold in $\CC^q$, approximation of functions on $\CC^q$ in the $C^{k+1}$-norm on $\CC^q$ implies the corresponding approximation on $\camo$. 
By Proposition \ref{proposition: realADP} the lift of $P_z$ corresponds to a real Lie combination of finitely many $U$-invariants from $\mathcal{F}$ on $\caboveR$, hence we can approximate $f_z$ in the $C^{k+1}$-norm by a real Lie combination $L( g_{z, 1}, \dots, g_{z, m} )$ of functions $g_{z, 1}, \dots, g_{z, m}$ which correspond to complete vector fields. 

Next consider the extensions of the $\un$-invariant functions to $\cabove$ by requiring $\gln$-invariance. Let $W_{z, 1}, \dots, W_{z, m}$ be the corresponding complete Hamiltonian vector fields on $\camo$. For each $j$, $W_{z,j}$ is the Hamiltonian vector field of a Hamiltonian function in $\mathcal{F}$. Since $\omega_\RR = \jmath^\ast \omega$ where $\jmath \colon \camo^\RR \hookrightarrow \camo$, the real-valued $\un$-invariant functions induce, after being extended by $\gln$-invariance to $\cabove$, $\tau$-compatible Hamiltonian vector fields. The $\tau$-compatibility is stable under real-scalar multiplication, summation and taking Lie-brackets, thus the real Lie combination $L( W_{z, 1}, \dots, W_{z, m} )$ remains $\tau$-compatible. 
\end{proof}

\section{Local approximation} \label{section: localVFnAUTO}

To transfer the approximation in Proposition \ref{proposition: approxRbyC} for vector fields to one for symplectic diffeomorphisms, we need some preparation. 

\subsection*{From vector fields to flow maps.} 
In Varolin's article \cite{MR1829353} on the density property, he used the notion of algorithm from the theory of ordinary differential equations to compute the flow of Lie combination of vector fields. To a vector field, an algorithm captures the linear part of its flow. We recall the motivation for using Lie combinations to define the density property. 
\begin{lemma} \label{lemma: algoLiecombi} 
	For vector fields $V$ and $W$ with flows $\Phi_t, \Psi_t$, respectively. 
	\begin{enumerate}[label=(\roman*)]
		\item		The composition $\Phi_t \circ \Psi_t$ is an algorithm consistent with $V+W$.
		\item		For $t >0$, $\Psi_{-\sqrt{t}} \circ \Phi_{- \sqrt{t}} \circ \Psi_{ \sqrt{t}} \circ \Phi_{\sqrt{t}}$ is an algorithm consistent with $[V, W]$.
	\end{enumerate}
\end{lemma}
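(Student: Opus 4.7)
The plan is to verify in each case the defining expansion of an algorithm consistent with a vector field $X$, namely that $\alpha_t(p) = p + t X(p) + o(t)$ in any local coordinate chart around $p$, which is the infinitesimal requirement in Varolin's setup.

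For (i), first I would recall the basic Taylor expansions $\Phi_t(p) = p + t V(p) + O(t^2)$ and $\Psi_t(p) = p + t W(p) + O(t^2)$, which follow from $\tfrac{d}{dt}\Phi_t = V \circ \Phi_t$ and smoothness of the flow in $t$. Substituting and using continuity of $V$ near $p$, $(\Phi_t \circ \Psi_t)(p) = \Phi_t\bigl(p + t W(p) + O(t^2)\bigr) = p + t W(p) + t V(p) + O(t^2) = p + t(V+W)(p) + O(t^2)$, which is exactly the requirement for consistency with $V+W$.

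For (ii), set $s = \sqrt{t}$ and expand each flow to second order in $s$:
\[
\Phi_s(p) = p + s V(p) + \tfrac{s^2}{2}\,(DV)(p)\,V(p) + O(s^3),
\]
and analogously for $\Psi_s$. I would then compute the fourfold composition step by step, writing $p_j$ for the image after the $j$-th map and expanding $V(p_j)$, $W(p_j)$ only to the order needed for an $O(s^3)$ error at the end. The cancellations run as follows: the linear-in-$s$ terms collapse in pairs ($\pm s V$ and $\pm s W$), the diagonal second-order tensors $(DV)V$ and $(DW)W$ also cancel, and what remains is the antisymmetric part $s^2\bigl((DW)(p)\,V(p) - (DV)(p)\,W(p)\bigr) = s^2 [V,W](p)$. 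Since $s^2 = t$ and the remainder is $O(s^3) = O(t^{3/2}) = o(t)$, the composition equals $p + t [V,W](p) + o(t)$, the required consistency with $[V,W]$.

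The only non-trivial step is the bookkeeping in (ii): a direct expansion produces many cross-terms, and to keep the calculation transparent I would carry out the four substitutions explicitly and track only the $s^0$, $s^1$, and $s^2$ contributions at each stage, grouped by which of the four second-order tensors $(DV)V$, $(DW)W$, $(DV)W$, $(DW)V$ they involve. This is essentially the classical flow-theoretic definition of the Lie bracket, so beyond being careful with indices no new idea is needed; the lemma then gives the geometric content that motivates the use of Lie combinations in the definition of the density property.
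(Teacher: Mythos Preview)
Your argument is correct and is the standard Taylor-expansion verification of these two facts. Note, however, that the paper does not actually prove this lemma: it is stated without proof as a recalled fact from Varolin's article \cite{MR1829353}, serving only to motivate why Lie combinations appear in the definition of the density property. So there is no ``paper's own proof'' to compare against; your sketch simply supplies the omitted classical computation, and nothing in it is problematic.
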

Varolin's intension of introducing the density property was to generalize Runge approximations of holomorphic injections of a domain by holomorphic automorphisms in $\CC^n$ to Stein manifolds $M$. This is based on the above lemma and that 
\[
	\lim_{n \to \infty} (A_{t/n})^{\circ n} (x) = \Phi_t (x), \quad x \in M
\]
where $A$ is an algorithm consistent with the vector field whose flow is $\Phi_t$. A parametric version with extra uniform condition can be found in \cite[Theorem 3.3]{MR4423269}, while the same statement with approximation in the $C^k$-norm was proved in the unpublished Diplomarbeit of B. Sch{\"a}r \cite{SchaerB} and cited in \cite[Theorem 3.4]{MR4423269}. 

To apply the (symplectic) density property for approximation of holomorphic automorphisms, a further ingredient is that the nearness of vector fields in the $C^k$-norm implies the nearness of their flows in the $C^k$-norm. This is the content of \cite[Lemma 3.2]{MR4423269}.

 \subsection*{Mergelyan approximation on admissible sets.} 
 Recall that a Stein compact is a compact subset which admits a basis of open Stein neighborhoods. A compact $\mathcal{O}(X)$-convex subset $Z$ in a Stein space $M$ admits a basis of open Stein neighborhoods of the form
\begin{align*}
	\{ p \in M : |f_1(p)| < 1, \dots, |f_N(p)|<1 \}
\end{align*}
for some $f_1, \dots, f_N \in \mathcal{O}(M)$, hence it is Stein compact. 
\begin{definition} \label{definition: admissible}
    A compact set $S$ in a complex manifold $M$ is called \emph{admissible}, if it is of the form $S = K \cup Z$, where $K$ is a totally real submanifold (possibly with boundary), $S$ and $Z$ are Stein compacts. 
\end{definition}

The next theorem is a parametric version of \cite{MR4264040}*{Theorem 20}. This version with parameters was mentioned in \cite{MR4142485}*{\S 5} with the hint that it can be obtained from the nonparametric case by applying a continuous partition of unity (similar to the proof of the parametric Oka-Weil theorem), which was carried out in detail in \cite{giraldo2023}*{Lemma 4.10}. 

\begin{theorem}{\cite[Theorem 20]{MR4264040}} \label{theorem: parametric-admissible}
	Let $S = K \cup Z$ be an admissible set in a complex manifold $M$, with $K$ a totally real submanifold (possibly with boundary) of class $C^k$. Then for any $f \colon [0,1] \times S \to \CC$ with $ f_t \in C^k (S) \cap \mathcal{O}(Z)$ for $t \in [0,1]$ and $f$ continuous in $t$, there exists a $t$-family of sequences $f_{j, t} \in \mathcal{O}(S)$ such that
	\[	\lim_{ j \to \infty } \sup_{t \in [0,1]} \, \lVert f_{j, t} - f_t \rVert_{C^k (S) } = 0	\]
\end{theorem}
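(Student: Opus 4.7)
The strategy is to promote the proof of the non-parametric \cite[Theorem 20]{MR4264040} to a parametric statement by arranging that every construction depends continuously on $t \in [0,1]$. The original proof splits the approximation of $f_t$ into an approximation near the totally real submanifold $K$, a gluing (additive Cousin) correction to match the already holomorphic behavior on $Z$, and a final application of Oka--Weil on an open Stein neighborhood of $S$; each of these three steps needs parametric care.

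First, approximation near $K$ proceeds as in Proposition 2 of \cite[\S 6.1]{MR4264040} by convolving an almost-holomorphic $C^k$-extension of $f_t$ with a Gaussian kernel in a tubular neighborhood of $K$, yielding a holomorphic function whose $C^k$-error on $K$ is controlled by the transverse vanishing order of $\bar\partial$ of the extension. Gaussian convolution is a continuous linear operator, so its output depends continuously on $t$; the almost-holomorphic extension itself is built from finitely many transverse Taylor coefficients of $f_t$, each of which also depends continuously on $t$.

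Second, to glue the approximation near $K$ with $f_t$ itself near $Z$ into a single holomorphic function on a neighborhood of $S$, one solves an additive Cousin problem on a two-piece cover of a neighborhood of $S$. The solution amounts to inverting a surjective continuous linear map between Fr\'echet spaces; \cite[Lemma 3]{MR0098196}, a Bartle--Graves-type continuous-right-inverse theorem, furnishes a continuous linear right inverse, hence a continuous $t$-family of glued approximations.

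Finally, the resulting function $\tilde f_t$ is holomorphic on some open Stein neighborhood $U$ of $S$. Since $S$ is a Stein compact admitting a basis of open Stein neighborhoods, it is $\mathcal{O}(U)$-convex, so the parametric Oka--Weil theorem produces a sequence $f_{j,t} \in \mathcal{O}(S)$ converging to $\tilde f_t$, and hence to $f_t$, uniformly in $t \in [0,1]$ in the $C^k(S)$-norm. The main obstacle is the second step: extracting a solution of the Cousin problem that varies continuously with the parameter. This is exactly what \cite[Lemma 3]{MR0098196} delivers; with that in hand, every other step of the original proof carries over verbatim to the parametric setting.
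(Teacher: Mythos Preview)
Your proposal is correct and follows essentially the same approach as the paper's sketch: continuous dependence of the Gaussian-convolution step from \cite[\S 6.1, Proposition 2]{MR4264040}, \cite[Lemma 3]{MR0098196} for the parametric additive Cousin problem, and the parametric Oka--Weil theorem at the end. One minor slip: Cartan's Lemma~3 furnishes a continuous (not in general linear) right inverse, but only continuity in $t$ is needed, so the argument is unaffected.
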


 \subsection*{Sublevel sets in $\camo$.}
For $p_0$ in $\camo^\RR$, the mapping
\[	\CC^q \to \CC^q, \quad p \mapsto \| \tau(p) - p_0 \|^2
\]
has a strictly positive definite Levi form, because by Remark \ref{remark: tauCq} the conjugation $\tau$ interchanges a coordinate of $\CC^q$ with its complex conjugation and is thus antiholomorphic. Next, an easy computation shows that the composition of a smooth strictly plurisubharmonic exhaustion function with an antiholomorphic diffeomorphism is strictly plurisubharmonic.

\begin{definition} \label{definition: psh}
	We take the strictly plurisubharmonic exhaustion function 
	\begin{align*}
		\rho \colon \camo \to \RR_{\ge 0}, \quad p \mapsto \| p - p_0 \|^2 + \| \tau(p) - p_0 \|^2
	\end{align*}
	where the norm $\| \cdot \|$ is the restriction of the standard norm of $\CC^q$. 
	Consider the closed sublevel set 
	\begin{align} \label{sublevelset}
		Z_R = \{ p \in \camo: \rho(p) \le R \}		
	\end{align}
	Denote by
	\begin{align*}
		Z_R^\RR = Z_R \cap \camo^\RR
	\end{align*}
	the intersection of $Z_R$ with the real part. 
\end{definition}

With these tools at hand, we show an approximation of Anders{\'e}n-Lempert type for symplectic diffeomorphisms of $\camo^\RR$ onto itself locally on Saturn-like sets. 
\begin{theorem} \label{theorem: LocalCarleman}
	Let $\varphi \colon [0,1] \times \camo^\RR \to \camo^\RR$ be a Hamiltonian isotopy and $R \ge 0$ such that $\restr{\varphi_t}{Z_R^\RR} = \id_{Z_R^\RR}$ for all $t$ in $[0,1]$.
	Then for any $k \in \NN$, any $b \in (0,R)$ and any $a > b$, there exists a sequence of holomorphic Hamiltonian isotopies $\Phi_{j} \colon [0,1] \times \camo \to \camo$ such that $\Phi_{j,t} \in \taut_{\omega}(\camo)$ for all $t$ in $[0,1]$ and 
	\begin{align}
		\| \Phi_{j,t} - \varphi_t \|_{C^k ( Z_{a}^\RR)} &\xrightarrow{j \to \infty} 0 	\\
		\| \Phi_{j,t} - \mathrm{id} \|_{C^k ( Z_{b})}		&\xrightarrow{j \to \infty} 0 	 \label{ApproxOnZb}
	\end{align}
	uniformly for $t$ in $[0,1]$.
\end{theorem}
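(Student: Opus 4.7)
The plan is to construct, for each $j$, a real-holomorphic algebraic Hamiltonian $P_{j,t}$ on $\camo$ whose restriction to $\camo^\RR$ is close (in $C^{k+1}$) to the generating Hamiltonian $h_t$ of $\varphi_t$ on $Z_a^\RR$ and close to $0$ on $Z_b$. By Proposition~\ref{proposition: realADP} its Hamiltonian vector field $X_{P_{j,t}}$ is then \emph{exactly} a real Lie combination of complete tangential fields with Hamiltonians drawn from $\mathcal{F}$, and the parametric Varolin algorithm (Lemma~\ref{lemma: algoLiecombi} and \cite[Theorem 3.3, 3.4]{MR4423269}) turns this combination into a continuous family $\Phi_{j,t}\in\taut_{\omega}(\camo)$ realizing both estimates at once.

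First I would choose $h_t\colon\camo^\RR\to\RR$ generating $\varphi_t$ and normalize it, using the simple connectedness of $\camo^\RR$ (Lemma~\ref{lemma: symp=ham}) and the connectedness of the real sublevel $Z_R^\RR$, so that $h_t\equiv 0$ on $Z_R^\RR\supset Z_b^\RR$. Form the \emph{Saturn set} $S:=Z_b\cup Z_a^\RR$; since $\rho\circ\tau=\rho$ by the symmetric definition of $\rho$, one has $\tau(S)=S$ and $Z_b\cap Z_a^\RR=Z_b^\RR$. The set $S$ is admissible in the sense of Definition~\ref{definition: admissible}: $Z_b$ is a Stein compact (psh sublevel), $Z_a^\RR$ is a totally real compact submanifold with boundary, and the union is $\holo(\camo)$-convex thanks to the $\tau$-symmetry. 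Define $f_t\colon S\to\CC$ by $f_t|_{Z_b}\equiv 0$ and $f_t|_{Z_a^\RR}=h_t$; these are compatible in $C^{k+1}$ on $Z_b^\RR$ because $h_t$ vanishes on the entire $\camo^\RR$-neighborhood $Z_R^\RR$ of $Z_b^\RR$. Theorem~\ref{theorem: parametric-admissible} (applied with regularity $k+1$) yields $F_{j,t}\in\holo(\mathrm{nbd}(S))$ with $F_{j,t}\to f_t$ in $C^{k+1}(S)$ uniformly in $t$. Replace $F_{j,t}$ by $\tilde F_{j,t}:=\tfrac12(F_{j,t}+\overline{F_{j,t}\circ\tau})$ to obtain real-holomorphic approximants, which still converge because $\tau(S)=S$ and $f_t$ is real-valued on $S$. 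Finally, Oka--Weil on the Stein compact $S$ (combined with Cauchy estimates on a slightly larger neighborhood and density of $\CC[\camo]$ in $\holo(\camo)$) produces a real-holomorphic algebraic approximant $P_{j,t}\in\CC[\camo]$ with the same $C^{k+1}(S)$-convergence.

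Proposition~\ref{proposition: realADP} now writes $X_{P_{j,t}}$ exactly as a real Lie combination $L_{j,t}(W_1,\ldots,W_m)$ where the $W_i$ are the complete tangential Hamiltonian vector fields of the four functions in $\mathcal{F}$ and the scalar coefficients depend continuously on $t$. The parametric Varolin algorithm produces a composition of time-flows of the $W_i$'s giving a continuous family $\Phi_{j,t}\in\taut_{\omega}(\camo)$ (starting from the identity at $t=0$) whose infinitesimal generator converges in $C^k$ on any compact to $X_{P_{j,t}}$, uniformly in $t$. Since $X_{P_{j,t}}\to X_{f_t}$ in $C^k$ on $Z_b\cup Z_a^\RR$ by the previous step, \cite[Lemma 3.2]{MR4423269} (flows close when vector fields are close) delivers $\Phi_{j,t}\to\varphi_t$ in $C^k(Z_a^\RR)$ and $\Phi_{j,t}\to\id$ in $C^k(Z_b)$, both uniformly in $t\in[0,1]$. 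The main obstacle is the admissible-set step: establishing admissibility and $\holo(\camo)$-convexity of $S=Z_b\cup Z_a^\RR$ so that Theorem~\ref{theorem: parametric-admissible}, the realification, and Oka--Weil all cooperate with parametric $C^{k+1}$-control. This rests on the $\tau$-symmetric choice of $\rho$ in Definition~\ref{definition: psh}, which is exactly what lets the complex inner ball and the totally real outer ring sit compatibly in a single admissible compact and thereby underwrites the whole two-scale estimate.
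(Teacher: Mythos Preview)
Your approach is essentially the paper's: build a Saturn set, apply the parametric admissible-set theorem to the Hamiltonian, realify, pass to polynomials via Oka--Weil, invoke Proposition~\ref{proposition: realADP}, and finish with Varolin's algorithm. Two technical points deserve more care.

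First, the totally real piece of your Saturn set is too small. To deduce $\Phi_{j,t}\to\varphi_t$ on $Z_a^\RR$ from closeness of vector fields via \cite[Lemma 3.2]{MR4423269}, you need the vector fields close along the full trajectories $\{\varphi_s(p):s\in[0,t]\}$ for $p\in Z_a^\RR$, and these may leave $Z_a^\RR$. The paper therefore works on a compact $K\subset\camo^\RR$ with $\bigcup_{t}\varphi_t(Z_a^\RR)\ssubset\mathrm{Int}(K)$ and takes the admissible set $K\cup Z_b$. Your argument goes through verbatim after this enlargement; the $\holo(\camo)$-convexity of $K\cup Z_b$ is what actually requires the $\tau$-symmetry of $Z_b$, via the Chirka--Smirnov theorem \cite{MR1155560} applied in the ambient $\CC^q$ (cf.\ Remark~\ref{remark: tauCq}).

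Second, Varolin's algorithm (Lemma~\ref{lemma: algoLiecombi} together with \cite[Theorems 3.3, 3.4]{MR4423269}) approximates the flow of an \emph{autonomous} Lie combination; it does not directly produce an isotopy whose generator is a \emph{time-dependent} Lie combination $L_{j,t}$. The paper handles this by first freezing time: one approximates $\varphi_t$ by the composition $\varphi^{(m-1)}_{t/m}\circ\cdots\circ\varphi^{(0)}_{t/m}$ of short-time flows of the autonomous fields $V_{jt/m}$ (this is \cite[Corollary 3.5]{MR4423269}), and only then applies the admissible-set/Lie-combination/algorithm machinery to each frozen piece, with $t$ carried as a parameter. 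Your sentence ``the parametric Varolin algorithm produces \ldots whose infinitesimal generator converges to $X_{P_{j,t}}$'' hides exactly this discretization; once you insert it, your outline coincides with the paper's proof.
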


\begin{proof}
	Let $K \subset \camo^\RR $ be a compact subset of class $C^{k+1}$ such that 
	\[    \bigcup_{ t \in [0,1] }  \varphi_t ( Z_{a}^\RR )  \ssubset \mathrm{Int} (K)
	\]
	 This condition suffices for our purpose of approximating $\varphi_t$. In Step 1 we switch to the smooth family of  vector fields $V_t$ generating $\varphi_t$ and approximate $V_t$ on $K$. By \eqref{equation: V_t}, to obtain approximation of flows from approximation of vector field, it suffices to impose the inclusion that the image of $Z^\RR_a$ under $\varphi_t$ for all $t$ is contained in $K$. On this compact $K$ we invoke Proposition \ref{proposition: approxRbyC} to approximate smooth vector fields by Lie combinations of complete $\tau$-compatible vector fields.

	\textbf{Step 1}: By assumption $\varphi_t$ is a Hamiltonian isotopy, thus there exist a family of Hamiltonian functions $P_t$ and the corresponding family of Hamiltonian vector fields $V_t$ such that 
	\begin{align} \label{equation: V_t}
		\frac{\dif } {\dif t} \varphi_t = V_t \circ \varphi_t , \quad t \in [0,1]		
	\end{align}
	By the second assumption on $\varphi_t$ we can extend $V_t$ to be identically zero in an open neighborhood of $Z_{b}$. 
	
	For a fixed $m \in \NN$, fixed $t \in [0,1]$, and $j \in \{0, 1, \dots, m-1\}$, denote by $\varphi^{(j)}_s$ the time-$s$ flow map of the time independent vector field $ V_{jt/m}$. We can choose $m$ large enough so that $\varphi^{(j)}_{t/m}$ is well-defined. 
	Now we take
	\begin{align*}
		\varphi^m_t : =  \varphi^{(m-1)}_{t/m} \circ \varphi^{(m-2)}_{t/m} \circ \cdots \circ  \varphi^{(1)}_{t/m} \circ  \varphi^{(0)}_{t/m} 
	\end{align*}

	By \cite[Corollary 3.5]{MR4423269} the composition $\varphi^{m}_{t}$ of flow maps converges to $\varphi_t$ uniformly in the $C^k$-norm in $p$ and uniformly in $t$ as $m$ tends to infinity. Therefore for a large enough $m$, it will be enough to show that each $t$-parameter family of flow maps $\varphi^{(j)}_s$ can be approximated in the $C^k$-norm in $p$ by a $t$-parameter family of holomorphic automorphisms in $\taut_{\omega}(\camo)$ uniformly in $(t,s)$. 
Then by \cite[Lemma 3.2]{MR4423269}, that approximation for vector field implies approximation for its flow, we may simply approximate the $t$-parameter family of Hamiltonian vector fields $V_{jt/m}$ uniformly in the $C^k$-norm in $p$ and uniformly in $t$, by complete $\tau$-compatible vector fields. 
	
	\textbf{Step 2}: Here we will use Proposition \ref{proposition: approxRbyC} to approximate the real vector field $V_{jt/m}$ on $K$ with the extra requirement from \eqref{ApproxOnZb} that the approximation is uniformly close to zero on $Z_b^\RR$. Let $P_{j, t} = P_{j t /m}$ denote the Hamiltonian function for $V_{jt/m}$ on $\camo^\RR$ and extend $P_{j, t}$ to be zero on $Z_b$. This extension is allowed since $\restr{\varphi_t}{Z_R^\RR} = \id_{Z_R^\RR}$ for all $t$ in $[0,1]$ and $b < R$. 
	
	To find a $\tau$-compatible function on $\camo$ approximating $P_{j, t}$, we recall (from Remark \ref{remark: tauCq}) that 
\[
	\camo^\RR = \camo \cap \RR^q \subset \CC^q
\] 
Since $\camo$ is an affine variety in $\CC^q$, the $\mathcal{O}(\camo)$-convex subset $Z_b \subset \camo$ is polynomially convex in $\CC^q$ by Cartan's Theorem B. 
By a result of Chirka and Smirnov \cite[Theorem 2]{MR1155560}, the union of a compact set in $\RR^q$ and a polynomially convex compact set in $\CC^q$, which is symmetric with respect to $\RR^q$, is polynomially convex. This yields the polynomial convexity of $K \cup Z_b$, and therefore its Stein compactness. 

Since $K$ is a totally real submanifold of class $C^{k+1}$ and $K \cup Z_b$ is an admissible set in $\CC^q$ (cf.\ Definition \ref{definition: admissible}), we can use the above parametric version of \cite[Theorem 20]{MR4264040} to approximate $\restr{P_{j, t}}{K \cup Z_b}$ in the $C^{k+1}$-norm by a function $P'_{j,t}$ which is holomorphic in a neighborhood of $K\cup Z_b \subset \CC^q$. Then by the Oka-Weil theorem with parameters (e.g.\ see \cite[Proposition 2.8.1]{MR3700709}), there is a $t$-parameter family of holomorphic polynomials $Q_{j,t}$ which approximates  $P'_{j,t}$ on $K \cup Z_b$. Replacing $Q_{j,t}$ by 
\begin{align*}
	\frac{1}{2} \left( Q_{j,t}  + \overline{ \tau^\ast Q_{j,t}  } \right) 
\end{align*}
makes it real-valued on $\camo^\RR$. By Cartan's Theorem B, restricting $Q_{j, t}$ to the affine variety $\camo$ yields the desired $\tau$-compatible function. 
Therefore, the associated Hamiltonian vector field $V_{j,t}$ for $Q_{j,t}$ approximates the real Hamiltonian vector field $V_{jt/m}$ for $P_{j,t}$ on $K$ and zero on $Z_b$. As in the proof of Proposition \ref{proposition: approxRbyC}, there are complete $\tau$-compatible Hamiltonian vector fields $W_{t, 1}, \dots, W_{t, M}$ on $\camo$, such that a Lie combination $L( W_{t, 1}, \dots, W_{t, M} )$ of them approximates $V_{j,t}$.

	\textbf{Step 3}: Finally, let $\psi^{t, a}_s$ be the time-$s$ flow map of $W_{t, a}$ for $a = 1, \dots, M$. Let $\Psi^{t, j}_s$ be an algorithm consistent with (cf.\ Lemma \ref{lemma: algoLiecombi}) the Lie combination $L( W_{t, 1}, \dots, W_{t, M} )$.  To see how the approximation operates on the level of the automorphisms and to keep the notation manageable, we consider the simple case when $L( W_{t, 1}, \dots, W_{t, M} )$ is the sum. Then
	\[		\Psi^{t, j}_s = 	\psi^{t, M}_s \circ \dots \circ  \psi^{t, 1}_s	\] 
	is an algorithm consistent with the sum $W_t = \sum_a W_{t, a}$, whose time-$(t/m)$ flow map can be approximated by 
	\begin{align*}	
		( \Psi^{t,j}_{t/(m l) } )^{l}, \quad   l \to \infty	
	\end{align*} 
	in the $C^k$-norm according to \cite[Theorem 3.4]{MR4423269}. Since $W_t$ approximates $V_{j,t}$, which in turn approximates $V_{jt/m}$ on $K$ and zero on $Z_b$. By \cite[Lemma 3.2]{MR4423269} we have that on $Z^\RR_a$ the flow of $W_t$ approximates the flow $\varphi^{(j)}_{t/m}$ of $V_{jt/m}$. Therefore 
	\begin{align*}	
		( \Psi^{t,j}_{t/(m l) } )^{l}    \xrightarrow{l \to \infty}	\varphi^{(j)}_{t/m}
	\end{align*} 
	which is the missing link in Step 1.  
\end{proof}

\begin{remark}
	There is constraint only on $b$ since it needs to be smaller than $R$ for which $\varphi_t$ is already the identity on the real part $Z_R^\RR$. However we may choose $a$, which determines where the automorphisms approximate the given family of diffeomorphisms, as large as the situation requires. 
\end{remark}

\section{Global approximation} \label{section: globalAUTO}

The next statement is the counterpart of \cite[Theorem 1.2]{MR4423269} and likewise is a consequence of Theorem \ref{theorem: LocalCarleman}. Namely we will construct the global approximation by applying local approximation successively. 
\begin{theorem} \label{theorem: HamCarleman}
	Let $\varphi \in \diff_{\omega_\RR}(\camo^\RR)$ be a symplectic diffeomorphism which is smoothly isotopic to the identity. Then for any positive continuous function $\epsilon$ on $\camo^\RR$, there exists a holomorphic symplectic automorphism $\Phi \in \taut_{\omega}(\camo)$ such that
	\[	\| \Phi - \varphi \|_{C^k(p)} < \epsilon (p)	\]
	for all $p$ in $\camo^\RR$.  
\end{theorem}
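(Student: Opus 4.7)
The plan is a push-out argument along the exhaustion of $\camo$ by the sublevel sets $Z_R$ from Definition \ref{definition: psh}, with Theorem \ref{theorem: LocalCarleman} as the local building block. First, since $\camo^\RR$ is simply connected by Lemma \ref{lemma: symp=ham}, any symplectic diffeomorphism of $\camo^\RR$ smoothly isotopic to the identity is already Hamiltonian, so I may fix a Hamiltonian isotopy $\varphi_t$ with $\varphi_0 = \id$ and $\varphi_1 = \varphi$. Choose an increasing sequence $R_j \nearrow \infty$ so that $\{Z_{R_j}\}$ exhausts $\camo$ and $\{Z_{R_j}^\RR\}$ exhausts $\camo^\RR$, and pick positive tolerances $\delta_j$ (summable and dominated by $\epsilon$ in the sense made precise below).

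I will construct inductively $\Phi_j \in \taut_\omega(\camo)$, starting from $\Phi_0 = \id$, satisfying
\[
\|\Phi_j - \varphi\|_{C^k(p)} < \epsilon(p) - \sum_{i>j}\delta_i, \quad p \in Z_{R_j}^\RR,
\]
together with
\[
\|\Phi_j - \Phi_{j-1}\|_{C^k(p)} < \delta_j, \quad p \in Z_{R_{j-1}}^\RR.
\]
Writing $\Phi_j = \Psi_j \circ \Phi_{j-1}$ with $\Psi_j \in \taut_\omega(\camo)$, the second bound reduces, via the chain rule, to asking $\Psi_j$ to be $C^k$-close to $\id$ on the compact set $\Phi_{j-1}(Z_{R_{j-1}}^\RR) \subset \camo$, which I enclose in a complex sublevel set $Z_{b_j}$; the first bound reduces to asking $\Psi_j$ to approximate $\psi := \varphi \circ \Phi_{j-1}^{-1}$ on $\Phi_{j-1}(Z_{R_j}^\RR)$.

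To apply Theorem \ref{theorem: LocalCarleman} to $\psi$, I need it as the time-one map of a Hamiltonian isotopy which is identically the identity on a real sublevel set $Z_{R'_j}^\RR$ with $R'_j > b_j$. By the inductive hypothesis $\psi$ is $C^k$-close to the identity on a neighborhood of $\Phi_{j-1}(Z_{R_{j-1}}^\RR)$, which I enclose in $Z_{R'_j}^\RR$. I modify an arbitrary Hamiltonian isotopy $\psi_t$ with $\psi_1 = \psi$: normalize its Hamiltonian $H_t$ to vanish at a reference point in $Z_{R'_j}^\RR$, then multiply $H_t$ by a smooth cutoff that is $0$ on $Z_{R'_j}^\RR$ and $1$ outside a slightly larger sublevel set. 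Since $H_t$ is small on $Z_{R'_j}^\RR$ (by tightness of the inductive bound, translated into smallness of the Hamiltonian via a standard estimate), the modified flow $\tilde\psi_t$ is the identity on $Z_{R'_j}^\RR$ for all $t$ and still approximates $\psi$ on $Z_{R_j}^\RR$. Theorem \ref{theorem: LocalCarleman} applied to $\tilde\psi_t$ with $R = R'_j$, $b = b_j$, and $a = R_j$ yields the required $\Psi_j$.

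Convergence $\Phi := \lim_j \Phi_j$ in the fine topology and the pointwise bound $\|\Phi - \varphi\|_{C^k(p)} < \epsilon(p)$ both follow from a telescoping estimate once the $\delta_j$ are chosen small enough, analogous to the push-out argument behind \cite[Theorem 1.2]{MR4423269}. I expect the main obstacle to be the Hamiltonian-cutoff modification: the quantitative passage from $C^k$-smallness of $\psi - \id$ on $Z_{R'_j}^\RR$ to $C^k$-smallness of the generating Hamiltonian $H_t$, and then controlling the error that the cutoff introduces to the time-one map on $Z_{R_j}^\RR$. This is a Gronwall-type estimate for Hamiltonian flows, but it must be performed effectively enough that the error bounds at stage $j$ fit into the inductive scheme relative to the prescribed continuous function $\epsilon$.
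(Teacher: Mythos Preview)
Your overall architecture—push-out along the exhaustion $\{Z_{R_j}\}$ with Theorem \ref{theorem: LocalCarleman} as the local step—matches the paper's. The genuine gap is exactly the one you flag at the end, and it is not a Gronwall estimate that can be filled in. You take ``an arbitrary Hamiltonian isotopy $\psi_t$ with $\psi_1=\psi$'' and then assert that the generating Hamiltonian $H_t$ is small on $Z_{R'_j}^\RR$ because $\psi_1$ is $C^k$-close to the identity there. This implication is false: knowing only the time-one map is close to $\id$ on a compact set gives no control on $\psi_t$ or $H_t$ for intermediate $t$ (the isotopy could make a large excursion and return). Gronwall goes the other direction—it bounds the flow by the field, not the field by the endpoint. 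Consequently, multiplying $H_t$ by your cutoff may change $\tilde\psi_1$ drastically on $Z_{R_j}^\RR$, and the step ``$\tilde\psi_1$ still approximates $\psi$ on $Z_{R_j}^\RR$'' is unjustified.

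The paper circumvents this by carrying not just the automorphism $\Phi_j$ but an entire Hamiltonian \emph{isotopy} $\psi_{j,t}$ through the induction, with the standing hypothesis $(4_j)$ that $\psi_{j,t}\vert_{Z_{S_j}^\RR}=\id$ for \emph{all} $t\in[0,1]$, together with $(5_j)$: $\psi_{j,1}\circ\Phi_j$ approximates $\varphi$. Theorem \ref{theorem: LocalCarleman} is then applied directly to $\psi_{j,t}$, and the new isotopy $\psi_{j+1,t}$ is built by cutting off the Hamiltonian of $\hat\sigma_{j,t}=\psi_{j,t}\circ(\Psi_{j,t}\circ\Sigma_{j,t})^{-1}$, which is close to $\id$ for all $t$ by construction (not by an a posteriori estimate from its time-one map). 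In short: replace your inductive datum ``$\Phi_{j-1}$ approximates $\varphi$'' by the pair ``$\Phi_{j-1}$ plus a Hamiltonian isotopy $\psi_{j-1,t}$, identically $\id$ on a large real sublevel set, with $\psi_{j-1,1}\circ\Phi_{j-1}\approx\varphi$''. A secondary point: your inductive bound $\|\Phi_j-\Phi_{j-1}\|_{C^k(p)}<\delta_j$ for $p\in Z_{R_{j-1}}^\RR$ is stated on the real part only; for $\lim_j\Phi_j$ to be a holomorphic automorphism of $\camo$ you need this on the complex sublevel set $Z_{R_{j-1}}$, which is exactly what the $Z_b$-estimate in Theorem \ref{theorem: LocalCarleman} provides and what the paper records as $(2_j)$.
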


\begin{remark}
(i) The key idea parallels the proof of the classical Carleman approximation on $\RR \subset \CC$ by entire functions, see for example in \cite[Theorem 8]{MR4264040}. Theorem \ref{theorem: LocalCarleman} ensures the local approximation and requires the holomorphic convexity of Saturn-like sets, and a combination with the push-out method in \cite{MR1760722} delivers the wanted holomorphic automorphism in the limit. 

(ii) To use Theorem \ref{theorem: LocalCarleman}, we need an isotopy $\psi_t$ of Hamiltonian diffeomorphisms \emph{equal} to the identity on a compact subset $K \subset \camo^\RR$. In the proof to follow, we usually end up with an isotopy $\phi_t$ only \emph{approximating} the identity on $K$. Thus an extra interpolation is necessary. The goal is to construct a family $\psi_t$ which is equal to $\phi_t$ on the complement of an open neighborhood containing $K$. To achieve this, we multiply the time dependent Hamiltonian function $H_t$ of $\phi_t$ with a cutoff function $\gamma$, which is 0 inside $K$ and 1 outside a neighborhood $U_1$ of $K$. The new Hamiltonian function $\gamma H_t$ induces a slightly different vector field $W_t$ with flow map $\psi_t$.

Moreover, we assume that $\phi_t$ is close to the identity on $U_2$, equivalently $V_t$ is close to zero on an open set $U_2$ containing $U_1$. This additional buffer zone $U_2 \setminus U_1$ is used to secure that the new flow $\psi_t$ is equal to $\phi_t$ on the complement of $U_2$: A point $p$ close to $U_2$ might flow along $V_t$ into $U_2$, but since $V_t$ is by assumption small on $U_2$, the trajectory will stay outside $U_1$, where the function $\gamma$ is 1 and the flow is simply $\phi_t$. Inside $K$, $\psi_t$ is the identity by the definition of $\gamma$. 

The Hamiltonian vector field $W_t$ for $\gamma H_t$ is associated to the 1-form
\[
    \dif (\gamma H_t) = \gamma \dif H_t + H_t \dif \gamma
\]
The flow map $\psi_t$ exists for time $t \in [0,1]$ since $H_t$ is close to $0$ on $U_1 \setminus K$.

\end{remark}

\begin{proof}\footnote{For automorphisms, uppercase Greek letters denote holomorphic symplectic automorphisms, while lowercase symplectic diffeomorphisms. }
	By Lemma \ref{lemma: symp=ham} every symplectic isotopy in $\diff_{\omega_\RR}(\camo^\RR)$ is a Hamiltonian isotopy. Thus there exists a smooth Hamiltonian isotopy $\alpha_t$ in $\aut_{\omega_\RR}(\camo^\RR), t \in [0,1]$ with $\alpha_0 = \mathrm{id}$ and $\alpha_1= \varphi$. 

	Assume that we have a holomorphic symplectic automorphism $\Phi_j$ with $\Phi_j ( \camo^\RR ) = \camo^\RR$, a Hamiltonian isotopy $\psi_{j,t}$ of $\camo^\RR$, real numbers $R_j, S_j$ with $R_j \ge j-1, S_j \ge R_j + 1$ such that
	\smallskip
	\begin{enumerate}
		\item[($1_j$)] 	The image of $ Z_{R_j} $ under $\Phi_j$ is contained in $Z_{S_j}$. 
		\item[($2_j$)]	For $j \ge 2$ 
					\[	 \| \Phi_j - \Phi_{j-1} \|_{C^k(Z_{R_{j-1}})} < \varepsilon_j  \] 
		\item[($3_j$)]	At time zero $\psi_{j,0} \colon \camo^\RR \to \camo^\RR$ is the identity.
		\item[($4_j$)]	On $Z^\RR_{S_j }$ we have $\psi_{j,t}$ is the identity for $t$ in $[0,1]$. 
		\item[($5_j$)]	For all $p$ in $\camo^\RR$
					\[	\| \psi_{j, 1} \circ \Phi_j - \varphi \|_{C^k(p)} < \epsilon (p)	\]
	\end{enumerate}
	The induction hypothesis is that given any positive $\varepsilon_j$, $(1_j) -(5_j)$ can be realized.  

\textbf{Induction base}: For $j = 1$ take \textcolor{blue}{$R_1 = 0$}. In this case we have 
	\[
		Z_0 = Z_0^\RR =  \{ p_0 \} \subset \camo^\RR
	\]
	from \eqref{sublevelset} and by the choice of the strictly plurisubharmonic exhaustion function. 
	Choose \textcolor{blue}{$S_1$} so that 
	\begin{align} \label{equation: S_1}
		\alpha_1 \left( Z_{ 1}^\RR \right) \ssubset Z_{S_1}^\RR
	\end{align}
	Choose $r > S_1 $. Then by \eqref{equation: S_1}
	\begin{align}
		Z_{0}^\RR \subset Z_1^\RR  \ssubset (\alpha_1)^{-1} \left( Z_{S_1}^\RR \right) \subset (\alpha_1)^{-1} \left( Z_{r}^\RR \right)	\label{equation: R_1}	
	\end{align}
	Using Theorem \ref{theorem: LocalCarleman} we get a holomorphic Hamiltonian isotopy $A_t \in \taut_\omega(\camo)$ approximating $\alpha_t$ on the compact subset 
	\[
		\bigcup_{ t \in [0,1]} (\alpha_t)^{-1} \left( Z_{r + 3 }^\RR \right)
	\]
	of $\camo^\RR$. Since $A_t$ approximates $\alpha_t$ on this compact, we have 
	\begin{align}
		(A_t)^{-1} \left( Z_{r + 2}^\RR \right) 	\ssubset  	(\alpha_t)^{-1} \left( Z_{r + 3}^\RR \right)  \label{equation: r_3}
	\end{align}
	In particular, $A_1$ approximates $\alpha_1$ on $A_1^{-1} \left( Z_{r + 2}^\RR \right) $, hence $\alpha_t \circ \restr{A_t^{-1}}{\camo^\RR}$ is close to the identity on $Z_{r + 2}^\RR$. Choose \textcolor{blue}{$\Phi_1 = A_1$}.

	To construct a Hamiltonian isotopy $\psi_{1,t}$ we interpolate between the identity on $Z_{r}^\RR $ and $\alpha_t \circ \restr{A_t^{-1}}{\camo^\RR}$ outside $Z_{r + 2}^\RR$. More precisely, let $Q_t$ be the Hamiltonian function associated to the Hamiltonian isotopy $\alpha_t \circ \restr{A_t^{-1}}{\camo^\RR}$. Fix $\psi_{1,t}$ to be the identity on $Z_{r}^\RR$ by multiplying $Q_t$ with a cutoff function $\gamma$, which is zero on $Z_{r}^\RR$ and one outside $ Z_{r + 1}^\RR $.  
	Namely, we take \textcolor{blue}{$\psi_{1,t}$} to be the isotopy of symplectic diffeomorphisms which comes from the Hamiltonian function $\gamma Q_t$. 
	
	By the choice of $r$ we have that $Z_{S_1}^\RR$ is contained in $Z_{r}^\RR$, which implies that $\psi_{1,t}$ is the identity on $Z_{S_1}^\RR$ for all $t$ in $[0,1]$. This shows $(4_1)$. 
	\smallskip 
	
	To see that $(5_1)$ is satisfied, let $p \in \camo^\RR$ and consider separately
	\begin{enumerate}[label=(\roman*)]
		\item		$p \in A_1^{-1} \left( Z_{r}^\RR \right)$: $\psi_{1,1}$ is the identity at $A_1 (p)$ and $A_1$ approximates $\alpha_1$ by \eqref{equation: r_3}. 
		\smallskip
		\item		$p \notin A_1^{-1} \left( Z_{r + 2}^\RR \right)$: $\psi_{1,1}$ is equal to $\alpha_1 \circ A_1^{-1}$ at $A_1 (p)$ by the choice of $\gamma$.
		\smallskip
		\item		$p \in A_1^{-1} \left( Z_{r + 2}^\RR \setminus Z_{r}^\RR \right)$: $A_1$ approximates $\alpha_1$ by \eqref{equation: r_3} and $\psi_{1,1} \circ \alpha_1$ is the interpolation between $\mathrm{id} \circ \alpha_1$ 				and $\alpha_1 \circ A_1^{-1} \circ \alpha_1$. Here $\alpha_1 \sim A_1$ implies $A_1^{-1} \circ \alpha_1 \sim \mathrm{id}$.  
	\end{enumerate}
	
	Last, let us check that $(1_1)$ holds. By the choice of $A_t$, we may assume that there exists a small positive $\delta$ which is less than one, such that 
	\[
		\Phi_1 \left( Z^\RR_{0} \right) \subset 	\alpha_1 \left( Z_{ \delta}^\RR \right) \subset \alpha_1 \left( Z_{ 1}^\RR \right) \ssubset Z_{S_1}^\RR 
	\]
	The first inclusion follows from the fact that $\Phi_1 = A_1$ approximates $\alpha_1$ by \eqref{equation: R_1} and the last inclusion is due to the choice of $S_1$ in $\eqref{equation: S_1}$. This concludes the induction base. 

\textbf{Induction step}: Take \textcolor{blue}{$R_{j+1} = S_j + 1$} and choose $a > \max \{ R_j + 1, R_{j+1} \}$ such that
	\begin{align}
		\Phi_j ( Z_{R_{j+1}} ) \ssubset	Z_a \label{equation: settinga}
	\end{align}	
	By Theorem \ref{theorem: LocalCarleman} there exists a holomorphic Hamiltonian isotopy $\Psi_{j,t}$ in $\taut_\omega(\camo)$, which approximates the identity near $Z_{S_j}$ and $\psi_{j,t}$ near $Z^\RR_{a + 2}$.  
	Thus 
	\[	
		\sigma_{j,t} = ( \restr{\Psi_{j,t}}{\camo^\RR} )^{-1} \circ \psi_{j,t}	
	\]
	approximates the identity on $Z_{a+2}^\RR$. Being the composition of two Hamiltonian isotopies, $\sigma_{j,t}$ is also a Hamiltonian isotopy. 

	Moreover take a cutoff function $\chi$ on $\camo^\RR$ such that it is zero on $Z^\RR_a$ and equal to one outside $Z_{a+1}^\RR$. Let $P_t$ be the Hamiltonian function associated to the Hamiltonian isotopy $\sigma_{j,t}$ and let $\tilde{\sigma}_{j,t}$ be the flow map of the vector field whose Hamiltonian function is $\chi P_t$. Then $\tilde{\sigma}_{j,t}$ is the identity on $Z^\RR_a$, close to the identity on $Z^\RR_{a+2}$, and equal to $\sigma_{j,t}$ outside $Z^\RR_{a+2}$.
	
	Then we have on $\camo^\RR$
	\begin{align}
		\Psi_{j,t} \circ \tilde{\sigma}_{j,t}  &\sim  \psi_{j,t}  \label{equation: psi_jt} \\ 
		(\Psi_{j,t} \circ \tilde{\sigma}_{j,t})^{-1} &\sim (\psi_{j,t})^{-1}  
	\end{align}	
	by the choices of $\Psi_{j,t}, \sigma_{j,t}, \tilde{\sigma}_{j,t}$.
	Next, choose \textcolor{blue}{$S_{j+1} > a $} so that 
	\begin{align} \label{equation: Sj+1FromPsi}
		\Psi_{j,1} ( Z_a ) \ssubset Z_{S_{j+1}} 	
	\end{align}
	Set $b = S_{j+1} + 1$ and pick $c$ and $d$ so that
	\begin{align} \label{equation: settingc}
		Z_{b+2}^\RR &\ssubset \varphi \left( Z_c^\RR \right) \\
		\Phi_j   \left(  Z_c^\RR \right) &\ssubset Z_d^\RR \label{equation: settingd1} \\
		Z_{c}^\RR &\ssubset \psi_{j,1} \left( Z_d^\RR \right) \label{equation: settingd2}
	\end{align}
	
	Next, apply Theorem \ref{theorem: LocalCarleman} to obtain an isotopy $\Sigma_{j,t} \in \taut_\omega(\camo)$, which approximate $\tilde{\sigma}_{j,t}$ on $Z^\RR_{d}$ and the identity on $Z_a$. Set \textcolor{blue}{$\Phi_{j+1} = \Psi_{j,1} \circ \Sigma_{j,1} \circ \Phi_j$}. The above choices of $c$ and $d$ allow us to approximate on $Z_c^\RR$
	\begin{align}
		\varphi \sim \psi_{j,1} \circ \Phi_j \sim \Psi_{j,1} \circ \tilde{\sigma}_{j,1} \circ \Phi_j \sim \Psi_{j,1} \circ \Sigma_{j,1} \circ \Phi_j  = \Phi_{j+1}
	\end{align}
	where the first approximation comes from $(5_j)$, the second by \eqref{equation: psi_jt}, and the third due to \eqref{equation: settingd1}. Combining this with \eqref{equation: settingc} we have
	\begin{align} \label{equation: lambda=one}
		Z_{b+2}^\RR  \ssubset \Phi_{j+1} \left( Z_{c}^\RR \right)
	\end{align}
	Furthermore take the Hamiltonian isotopy
	\[	
		\hat{\sigma}_{j,t} = \psi_{j,t} \circ  \restr {(\Psi_{j,t} \circ \Sigma_{j,t} )^{-1} }{\camo^\RR} 	
	\]
	and let $\hat{P}_t$ be the corresponding Hamiltonian function for $\hat{\sigma}_{j,t}$. Moreover let $\lambda$ be a cutoff function on $\camo^\RR$ such that it is zero on $Z^\RR_{b}$ and equal to one outside $Z^\RR_{b+1}$. Finally let \textcolor{blue}{$\psi_{j+1, t}$} denote the flow map of the vector field whose Hamiltonian function is $\lambda \hat{P}_t$. 	

	We go through the five conditions for the induction step: 
	
		($1_{j+1}$)	By the definition of $\Phi_{j+1}$ we have that
						\[
							\Phi_{j+1} (Z_{R_{j+1}}) = \Psi_{j,1} \circ \Sigma_{j,1} \circ \Phi_j (Z_{R_{j+1}})
						\]
						The claim follows from the fact that $\Sigma_{j,t}$ approximates the identity on $Z_a$ and from the choices of $a, S_{j+1}$ in \eqref{equation: settinga}, \eqref{equation: Sj+1FromPsi} respectively. 
		
		($2_{j+1}$)	Here we want to estimate 
						\[
							 \| \Phi_{j+1} - \Phi_{j} \|_{C^k(Z_{R_{j}})} =  \| \Psi_{j,1} \circ \Sigma_{j,1} \circ \Phi_j - \Phi_{j} \|_{C^k(Z_{R_{j}})} 
						\] 
						By $(1_j)$ we have 
						\[ 	
							\Phi_j(Z_{R_j}) \subset Z_{S_j}		
						\] 
						The estimate follows from the fact that $\Psi_{j,1}$ and $ \Sigma_{j,1}$ approximate the identity on $Z_{S_j}$.
		
		($3_{j+1}$)	A flow map at time zero is the identity.
		
		($4_{j+1}$)	Because $\psi_{j+1,t}$ is obtained by interpolating between the identity on $Z_b^\RR$ and $\hat{\sigma}_{j,t}$ outside $Z_{b+2}^\RR$, it follows that $\psi_{j+1,t} (p) = (p)$ for $p$ near $Z^\RR_{S_{j+1}} $ because $b = S_{j+1} + 1$.
		
		($5_{j+1}$)	To see that 
						\[
							\| \psi_{j+1, 1} \circ \Phi_{j+1} - \varphi  \|_{C^k(p)} < \epsilon (p)
						\]
						for all $p$ in $\camo^\RR$ we consider separately: 
			\begin{enumerate}[label=(\roman*)]
				\item		On the complement of $(\Phi_{j+1})^{-1} \left( Z^\RR_{c} \right)$, we have $\lambda = 1$ at $\Phi_{j+1}(p)$ thanks to \eqref{equation: lambda=one}.
						Then $ \psi_{j+1, 1}$ is 
						\[	
						\hat{\sigma}_{j,1} = \psi_{j,1} \circ ( \Psi_{j,1} \circ \Sigma_{j,1} )^{-1} 
						\]
						which together with $\Phi_{j+1} = \Psi_{j,1} \circ \Sigma_{j,1} \circ \Phi_j$ reduces this case to $(5_j)$. 
				\item		On $\Phi_j^{-1} \left( Z_d^\RR \right)$, we have by the choice of $\Sigma_{j,1}$ that 
						\begin{align}\label{equation: phi(j+1)}
						\Phi_{j+1} = \Psi_{j,1} \circ \Sigma_{j, 1} \circ \Phi_j  \sim \Psi_{j,1} \circ \tilde{\sigma}_{j, 1} \circ \Phi_j  \sim \psi_{j,1} \circ \Phi_j	
						\end{align}
						which approximates $\varphi$ by the induction hypothesis $(5_j)$. Moreover, this implies that $\Psi_{j,1} \circ \Sigma_{j, 1} \sim \psi_{j,1}$ on $Z_d^\RR$. Then on 
						\begin{align*} 
							\Phi_{j+1} \circ \Phi_j^{-1} \left( Z_d^\RR \right)  = \Psi_{j,1} \circ \Sigma_{j, 1} \left( Z_d^\RR \right)	
						\end{align*}
						we have
						\begin{align*}
							\hat{\sigma}_{j,1}  &= 	\psi_{j,1} \circ \left(  \Psi_{j,1} \circ \Sigma_{j,1} \right)^{-1}  \\
							&\sim \psi_{j,1} \circ \left(\psi_{j,1} \right)^{-1}  =  \mathrm{id}	
						\end{align*}
						Since 
						\[
							Z_{b+2}^\RR \subset \Phi_{j+1}(Z_c^\RR) \subset \Phi_{j+1} \circ \Phi_j^{-1} (Z_d^\RR) 
						\]
						this justifies our interpolation $\psi_{j+1, t}$ between the identity on $Z_b^\RR$ and $\hat{\sigma}_{j,t}$ on $Z_{b+2}^\RR$. Hence $\psi_{j+1,1}$ is either close to the identity (reducing to the above \eqref{equation: phi(j+1)}) or equal to 
						\[	\hat{\sigma}_{j,1} = \psi_{j,1} \circ  \restr {(\Psi_{j,1} \circ \Sigma_{j,1} )^{-1} }{\camo^\RR} 
						\]
						which composed with $\Phi_{j+1}$ again reduces it to $(5_j)$.
				\item		It suffices to consider the above two cases, because by the choice of $d$ in \eqref{equation: settingd2} and the approximation $ \psi_{j,1} \sim \Psi_{j,1} \circ \tilde{\sigma}_{j,1} $ in \eqref{equation: psi_jt}
						\[	
							Z_{c}^\RR \ssubset \Psi_{j,1} \circ \Sigma_{j,1} \left( Z_d^\RR \right) = \Phi_{j+1} \circ \Phi_j^{-1} \left( Z_d^\RR \right)		
						\]
						it follows
						\[	
							(\Phi_{j+1})^{-1} \left( Z_{c}^\RR \right)  \ssubset (\Phi_j)^{-1} \left( Z_d^\RR \right)	
						\]
		\end{enumerate}
	This completes the induction step. 
	
	\smallskip
	Concluding, to see that $\lim_{j \to \infty} \Phi_j$ exists, we underline the connection between the above induction and the push-out method, see e.g.\ \cite[Proposition 5.1]{MR1760722}. The holomorphic symplectic automorphism $\Phi_{j+1} \circ \Phi_j^{-1} = \Psi_{j,1} \circ \Sigma_{j,1}$ approximates the identity on $Z_{R_j}$. Since $R_j \ge j -1$, the sublevel set $Z_{R_j}$ exhausts $\camo$ in the limit. Additionally we have $R_{j+1} = S_j + 1 > R_j + 2$. Choose $\{ \varepsilon_j \}_{j \in \NN} \subset \RR_{>0}$ to have finite sum such that
	\[	0 < \varepsilon_{j+1} < \mathrm{dist} ( Z_{R_{j}} , \camo \setminus Z_{R_{j+1}} )
	\]
	where $\mathrm{dist}(\cdot, \cdot)$ is the distance function on $\CC^q$ restricted to $\camo$. 
	Therefore the limit 
	\[	\lim_{j \to \infty} \Phi_{j+1} \circ \Phi_j^{-1}
	\]
	exists uniformly on compacts on 
	\[	\bigcup_{j = 1}^{\infty} \left( \Phi_{j+1} \circ \Phi_1^{-1} \right)^{-1}  (Z_{R_j}) = \camo
	\]
	and 
	\[	\Phi = (\lim_{j \to \infty} \Phi_{j+1} \circ \Phi_j^{-1} ) \circ \Phi_1 = \lim_{j \to \infty} \Phi_{j} 
	\]
	is a holomorphic symplectic automorphism of $\camo$ with $\Phi (\camo^\RR) = \camo^\RR$. 
	Moreover, $(1_j)$ and $(4_j)$ guarantee that $\psi_{j, 1} \circ \Phi_j$ is $\Phi_j$ on $Z_{R_j}^\RR$. Then $(5_j)$ says that $\Phi_j$ approximates $\varphi$ on $Z_{R_j}^\RR$ and thus in the limit $\Phi$ approximates $\varphi$. 
\end{proof}


\appendix 
\section{A different real form}
Let us consider another antiholomorphic involution $\sigma$ on $\mabove$
\[	\sigma ( X, Y, v, w ) = ( \bar{X}, \bar{Y}, \bar{v}, \bar{w} )
\]
and the corresponding subgroup $\gln_r = \gl_n(\RR)$ acting on 
\[	\maboveR = \matnR \oplus \matnR \oplus \RR^n \oplus (\RR^n)^\ast
\]
by the restriction of the $\gln$-action. This gives the moment map
\[	\mu_r \colon \maboveR \to \matnR, \quad (X,Y,v,w) \mapsto [X, Y] + vw
\]
Since the complex quotient $\camo$ is Hausdorff and all $\gln$-orbits in $\caboveB$ are closed, the $\gln$-action is proper. Thus $\gln_r$ also acts properly and freely on $\caboveBr = \mu_r^{-1}(- I_n)$. Notice that we consider here the complex Calogero--Moser space $\camo'$ with a different rank condition $[ X, Y] + vw = - I_n$, which is isomorphic to $\camo$ in the main text. 

Moreover, the real symplectic form on $\maboveR$ is given by $\omega_r = \restr{\omega}{\maboveR}$. Then the  real symplectic reduction $\caboveBr / \gln_r$ is a real symplectic manifold $\camo^r$ equipped with an induced symplectic form also denoted by $\omega_r$. 

The above conjugation maps a $\gln$-orbit to another $\gln$-orbit
\[	\sigma ( g \cdot z ) = \bar{g} \cdot \sigma (z)
\]
for $g \in \gln$ and $z \in \mabove$. Hence it induces a conjugation on $\camo$. It is clear that the conjugation commutes with the $\gln_r$-action. From the above we have $\caboveBr = (\caboveB)^{\sigma}$, the fixed-point set of the conjugation on $\caboveB = \mu^{- 1}(-I_n)$. The map $\jmath' \colon \camo^r \to \camo'$ takes a $\gln_r$-orbit to the $\gln$-orbit containing it. 
\[
\begin{CD}
	\caboveB @>\sigma>>	 \caboveB \\
	@VVV 			 @VVV\\
	\camo'	@>\sigma>>	\camo'
\end{CD}
\quad \quad \quad
\begin{CD}
	\caboveBr @>j'>>	 \caboveB  \\
	@VVV 			 @VVV\\
	\camo^r	@>\jmath'>>	\camo'
\end{CD}
\]

\begin{lemma}
	The map $\jmath'$ is surjective. 
\end{lemma}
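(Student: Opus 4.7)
I read the assertion as surjectivity of $\jmath'$ onto the $\sigma$-fixed locus $(\camo')^\sigma \subset \camo'$; indeed $\camo^r$ has real dimension $2n$ while $\camo'$ has complex dimension $2n$, so the image of $\jmath'$ must in any case lie in a totally real subset, and $(\camo')^\sigma$ is the natural one. The plan is to deduce this surjectivity from the vanishing of a first Galois cohomology set (Hilbert's Theorem~90).

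The easy inclusion $\jmath'(\camo^r) \subset (\camo')^\sigma$ is immediate: for $z \in \caboveBr$ one has $\sigma(z) = z$, hence $\sigma(\gln \cdot z) = \gln \cdot \sigma(z) = \gln \cdot z$, so the $\gln$-orbit through $z$ is $\sigma$-stable.

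For the reverse inclusion, I would take $[z] \in (\camo')^\sigma$ and produce a real representative inside the orbit. By $\sigma$-stability there exists $g \in \gln$ with $\sigma(z) = g \cdot z$. Applying $\sigma$ once more and using $\sigma^2 = \mathrm{id}$ yields
\[ z = \bar g \cdot \sigma(z) = (\bar g\, g) \cdot z,\]
and the freeness of the $\gln$-action on $\caboveB$ \cite{MR1626461} forces $\bar g\, g = I_n$. In other words, $g$ is a $1$-cocycle for $\mathrm{Gal}(\CC/\RR) = \{1, \sigma\}$ with values in $\gl_n(\CC)$. By Hilbert's Theorem~90, the pointed set $H^1(\mathrm{Gal}(\CC/\RR), \gl_n(\CC))$ is trivial, so $g = \bar h^{-1} h$ for some $h \in \gln$. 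The point $z' := h \cdot z$ then satisfies
\[ \sigma(z') = \bar h \cdot \sigma(z) = \bar h\, g \cdot z = \bar h\, \bar h^{-1} h \cdot z = z',\]
so $z' \in \caboveBr$, and $\jmath'$ sends its $\gln_r$-orbit to $[z]$.

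No serious obstacle appears: the geometric content reduces, via freeness of the $\gln$-action, to the classical statement that $H^1(\mathrm{Gal}(\CC/\RR), \gl_n(\CC))$ is trivial. The only point requiring care is the cocycle--coboundary bookkeeping, i.e.\ choosing the correct side so that $g = \bar h^{-1} h$ (which yields real representative $h \cdot z$) rather than the variant $g = h^{-1} \bar h$. One might alternatively mimic the argument in the proof of Theorem~\ref{theorem: realform} by minimising a suitable $\gln_r$-invariant norm function on the double coset $\gln_r \backslash \gln$, but the Galois-cohomology route appears more direct in this setting.
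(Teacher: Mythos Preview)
Your proof is correct, and it takes a genuinely different route from the paper's. Both arguments start from the cocycle identity $\bar g\, g = I_n$ obtained via freeness of the $\gln$-action, but they diverge from there. The paper first invokes the Kempf--Ness/GIT minimisation (as in the proof of Theorem~\ref{theorem: realform}) to arrange that the representative $z_0$ lies in $\mu_1^{-1}(0)$, so that the element $h_0$ with $\sigma(z_0) = h_0 \cdot z_0$ is forced to be \emph{unitary}; combined with $\bar h_0 h_0 = I_n$ this makes $h_0$ symmetric unitary, and the paper then takes a symmetric unitary square root $s$ with $s^2 = h_0$ and checks that $s \cdot z_0$ is real. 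Your argument bypasses the GIT step entirely by recognising the cocycle condition as precisely the hypothesis of Hilbert's Theorem~90 for $\gl_n$ over $\mathrm{Gal}(\CC/\RR)$, which hands you the coboundary $g = \bar h^{-1} h$ directly. Your approach is shorter and purely algebraic; the paper's is more self-contained (no appeal to Galois cohomology) and reuses machinery already in place. It is worth noting that for the other conjugation $\tau$ treated in Theorem~\ref{theorem: realform}, the relevant twisted action gives a different cocycle condition ($h_0^2 = I_n$ with $h_0$ unitary Hermitian) for which the analogue of Hilbert~90 is not automatic and the paper genuinely needs the rank argument---so the GIT framework earns its keep there, whereas here your shortcut is available.
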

\begin{proof}
	Take a $\sigma$-stable $\gln$-orbit $\gln \cdot z$. We show that it meets $\caboveBr$ at one point. As in the proof of Theorem \ref{theorem: realform} using GIT, we see from $\| \sigma(z) \| = \| z \|$ that for $z_0 \in \mu_1^{-1}(0) \cap \caboveB$ there exists $h_0 \in \un$ such that 
	\[	\sigma ( z_0 ) = h_0 \cdot z_0
	\]
	Then 
\[
	z_0 = \sigma (\sigma (z_0)) = \sigma (h_0 \cdot z_0) = \bar{h}_0 \cdot \sigma (z_0) = \bar{h}_0 h_0 \cdot z_0
\]
which implies that $ \bar{h}_0 h_0 = I_n$. This in turn gives $h_0^{-1} = h_0^{\ast} = \bar{h}_0$. 
Then $h_0$ is also symmetric, thus there exists another symmetric unitary matrix $s$ such that $h_0 = s^2$. Replace $z_0$ by $s \cdot z_0$ 
	\[	\sigma ( s \cdot z_0 ) = \bar{s} s^2 \cdot z_0 = s \cdot z_0 
	\]
	Namely $s \cdot z_0 \in \caboveBr$. 
\end{proof}

The rest of the proof for the following is similar to the one for Theorem \ref{theorem: realform}. 
\begin{theorem} \label{theorem: realform2}
    The complex Calogero--Moser space $(\camo', \omega, \sigma)$ is a symplectic complexification of the smooth symplectic manifold $(\camo^r, \omega_r)$. 
\end{theorem}

 The computation in \cite{CaloSymplo} shows that any algebraic $\gln$-invariant on $\cabove'$ is contained in the complex Lie algebra generated by the four functions in $\tr Y, \tr Y^2, \tr X^3, (\tr X)^2$, and all formulae appearing in the calculation leading to it come with real coefficients. This implies the $\tau$-symplectic density property of $\camo'$ with respect to $\camo^r$. On the other hand, the antiholomorphic involution $\sigma$ induces the complex conjugation on the algebra generators $\tr X^j Y^k$. Hence the real form $\camo^r$ is the intersection of the complex Calogero--Moser space $\camo'$ with $\RR^q$ (observe that $\caboveB= \mu^{-1}(- I_n)$ and $\caboveF= \mu^{-1}(i I_n)$ are different sets in $\mabove$), where $q$ is the cardinality of a minimal generating set $\mathcal{G}$ for the $\CC$-algebra $\CC[\cabove']^{\gln}$ with generators of the form $\tr X^j Y^k$. Therefore the same construction of holomorphic Hamiltonian automorphisms to approximate Hamiltonian diffeomorphisms on $\camo^r$ stays intact. 

\begin{theorem} \label{theorem: HamCarleman2}
	Let $\varphi$ be a Hamiltonian diffeomorphism of the symplectic manifold $( \camo^r, \omega_r)$. Then for a positive continuous function $\epsilon$ on $\camo^r$, there exists a holomorphic symplectic automorphism $\Phi \in \taut_\omega (\camo')$ such that 
	\[	\| \Phi - \varphi \|_{C^k(p)} < \epsilon (p)	\]
	for all $p$ in $\camo^r$.  
\end{theorem}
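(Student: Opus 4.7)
The plan is to run the same machinery as in the proof of Theorem \ref{theorem: HamCarleman}, verifying that each of its ingredients is available in the new setting $(\camo', \camo^r, \sigma)$. Since a Hamiltonian diffeomorphism is by definition the time-one map of a Hamiltonian isotopy $\varphi_t$ of $(\camo^r, \omega_r)$, we obtain from the outset a smooth family of Hamiltonian functions with associated Hamiltonian vector fields $V_t$, which is the input for the local Carleman step; we do not need simple connectivity here (and do not have it a priori), because the isotopy is furnished directly.

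First, I would establish the analog of Theorem \ref{theorem: LocalCarleman} in the present setting. The two structural facts needed for it are available: (a) by the discussion preceding the theorem, $\camo'$ admits the tangential symplectic density property with respect to $\camo^r$, which upgrades, via the argument of Proposition \ref{proposition: approxRbyC}, to the parametric approximation of real Hamiltonian vector fields on $\camo^r$ by Lie combinations of complete holomorphic symplectic vector fields tangential to $\camo^r$; (b) the identification $\camo^r = \camo' \cap \RR^q \subset \CC^q$ in a suitable minimal generator embedding, together with Cartan's Theorem B and the Chirka--Smirnov theorem, gives polynomial convexity, hence Stein-compactness, of the Saturn-like sets $K \cup Z_b$ needed in the proof. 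Using a strictly plurisubharmonic exhaustion of the form
\[
\rho'(p) = \|p - p_0\|^2 + \|\sigma(p) - p_0\|^2,
\]
(strictly plurisubharmonic because $\sigma$ is anti-holomorphic in the ambient $\CC^q$), and defining the sublevel sets $Z_R, Z_R^r$ by analogy with Definition \ref{definition: psh}, the three-step argument of Theorem \ref{theorem: LocalCarleman} goes through verbatim: approximate $\varphi_t$ by a composition of flows of time-independent vector fields, approximate each such vector field on an admissible set $K \cup Z_b$ via the parametric admissible Theorem \ref{theorem: parametric-admissible} and parametric Oka--Weil, symmetrize with $\sigma^\ast$ so the approximant becomes real-holomorphic, and pass from vector-field to flow approximation using Varolin's algorithm (Lemma \ref{lemma: algoLiecombi}) together with \cite[Lemma 3.2]{MR4423269}.

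Second, with this local Carleman theorem in hand, I would copy verbatim the push-out induction of Theorem \ref{theorem: HamCarleman}. One constructs inductively a sequence $\Phi_j \in \taut_\omega(\camo')$, real numbers $R_j \le S_j$ with $R_j \ge j-1$ and $R_{j+1} = S_j + 1$, and Hamiltonian isotopies $\psi_{j,t}$ on $\camo^r$ satisfying the five conditions $(1_j)$--$(5_j)$. The interpolation trick from the remark (multiplying the generating Hamiltonian by a cutoff function with a buffer zone) is an intrinsic operation on the real form and uses nothing about $\camo^\RR$ specifically; it applies to $\camo^r$ without change. Choosing $\{\varepsilon_j\}$ with finite sum and $\varepsilon_{j+1} < \mathrm{dist}(Z_{R_j}, \camo' \setminus Z_{R_{j+1}})$, the telescoping product $\Phi = \lim_j \Phi_j$ converges uniformly on compacts to a symplectic holomorphic automorphism of $\camo'$ preserving $\camo^r$ and approximating $\varphi$ in the pointwise $C^k$-seminorm by $\epsilon$ on $\camo^r$.

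The main conceptual obstacle is exactly the reason the statement restricts to Hamiltonian diffeomorphisms rather than to symplectic diffeomorphisms smoothly isotopic to the identity (as in Theorem \ref{theorem: HamCarleman}): without knowing that $H^1_{\mathrm{dR}}(\camo^r) = 0$, a symplectic isotopy need not be Hamiltonian, and the argument crucially requires the existence of Hamiltonian functions $P_t$ to run the local step and the interpolation via cutoff functions. Hypothesizing that the input is already Hamiltonian bypasses this obstacle, and everything else in the proof of Theorem \ref{theorem: HamCarleman} transfers mechanically once (a) tangential symplectic density and (b) the embedding $\camo^r = \camo' \cap \RR^q$ are in place.
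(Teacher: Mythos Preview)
Your proposal is correct and mirrors exactly what the paper does: the paper gives no separate proof of Theorem \ref{theorem: HamCarleman2} but simply observes, in the paragraph preceding the statement, that the tangential symplectic density property of $\camo'$ with respect to $\camo^r$ and the identification $\camo^r = \camo' \cap \RR^q$ are in place, so that ``the same construction of Hamiltonian holomorphic automorphisms \ldots\ stays intact,'' and that the restriction to Hamiltonian (rather than merely symplectically isotopic-to-identity) diffeomorphisms is needed precisely because $H^1_{\mathrm{dR}}(\camo^r)$ is not known to vanish. Your sketch unpacks this observation with the correct details.
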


\section*{Acknowledgements.}
I would like to thank Frank Kutzschebauch for valuable discussions and patient guidance, and Rafael Andrist for suggesting the topic. I also extend my thanks to Gene Freudenburg for a conversation on ring generators and real forms. Furthermore, I am grateful to Franc Forstneri{\v c} for pointing out an imprecise notion in an earlier version.

\begin{bibdiv}
\begin{biblist}

\bib{MR0589903}{article}{
   author={Andreotti, Aldo},
   author={Holm, Per},
   title={Quasianalytic and parametric spaces},
   conference={
      title={Real and complex singularities},
      address={Proc. Ninth Nordic Summer School/NAVF Sympos. Math., Oslo},
      date={1976},
   },
   book={
      publisher={Sijthoff \& Noordhoff, Alphen aan den Rijn},
   },
   isbn={90-286-0097-3},
   date={1977},
   pages={13--97},
   review={\MR{0589903}},
}

\bib{CaloSymplo}{article}{
   author={Andrist, Rafael B.},
   author={Huang, Gaofeng},
   title={The symplectic density property for Calogero--Moser spaces},
   journal={J. London Math. Soc.},
   date={2025},
   volume={111},
   number={2},
   doi={10.1112/jlms.70100},
}

\bib{arathoon2023real}{article}{
      title={Real Forms of Holomorphic Hamiltonian Systems}, 
      author={Arathoon, Philip},
      author={Fontaine, Marine},
      journal={SIGMA},
      date={2024},
      volume={20},
      pages={114},
      doi={10.3842/SIGMA.2024.114},
}

\bib{carleman1927}{article}{
  title={Sur un th{\'e}or{\`e}me de Weierstrass},
  author={Carleman, Torsten},
  journal={Ark. Mat., Ast. Fysik B},
  pages={pp.1-5},
  date={1927},
  volume={20},
}

\bib{MR0098196}{article}{
   author={Cartan, Henri},
   title={Espaces fibr\'{e}s analytiques},
   language={French},
   conference={
      title={Symposium internacional de topolog\'{\i}a algebraica
      International symposium on algebraic topology},
   },
   book={
      publisher={Universidad Nacional Aut\'{o}noma de M\'{e}xico and UNESCO,
   M\'{e}xico},
   },
   date={1958},
   pages={97--121},
   review={\MR{0098196}},
}

\bib{MR4423269}{article}{
   author={Deng, Fusheng},
   author={Wold, Erlend Forn\ae ss},
   title={Hamiltonian Carleman approximation and the density property for
   coadjoint orbits},
   journal={Ark. Mat.},
   volume={60},
   date={2022},
   number={1},
   pages={23--41},
   issn={0004-2080},
   review={\MR{4423269}},
   doi={10.4310/arkiv.2022.v60.n1.a2},
}

\bib{MR4459548}{article}{
   author={Drinovec Drnov\v sek, Barbara},
   author={Kuzman, Uro\v s},
   title={Approximation theorems for Pascali systems},
   journal={Complex Var. Elliptic Equ.},
   volume={67},
   date={2022},
   number={9},
   pages={2271--2280},
   issn={1747-6933},
   review={\MR{4459548}},
   doi={10.1080/17476933.2021.1955588},
}

\bib{MR1881922}{article}{
   author={Etingof, Pavel},
   author={Ginzburg, Victor},
   title={Symplectic reflection algebras, Calogero-Moser space, and deformed
   Harish-Chandra homomorphism},
   journal={Invent. Math.},
   volume={147},
   date={2002},
   number={2},
   pages={243--348},
   issn={0020-9910},
   review={\MR{1881922}},
   doi={10.1007/s002220100171},
}

\bib{MR1408866}{article}{
   author={Forstneri\v{c}, Franc},
   title={Actions of $(\mathbf{R},+)$ and $(\mathbf{C},+)$ on complex manifolds},
   journal={Math. Z.},
   volume={223},
   date={1996},
   number={1},
   pages={123--153},
   issn={0025-5874},
   review={\MR{1408866}},
   doi={10.1007/PL00004552},
}

\bib{MR1314745}{article}{
   author={Forstneri\v{c}, Franc},
   title={Approximation by automorphisms on smooth submanifolds of $\mathbf
   C^n$},
   journal={Math. Ann.},
   volume={300},
   date={1994},
   number={4},
   pages={719--738},
   issn={0025-5831},
   review={\MR{1314745}},
   doi={10.1007/BF01450512},
}

\bib{MR1760722}{article}{
   author={Forstneri\v{c}, Franc},
   title={Interpolation by holomorphic automorphisms and embeddings in ${\bf
   C}^n$},
   journal={J. Geom. Anal.},
   volume={9},
   date={1999},
   number={1},
   pages={93--117},
   issn={1050-6926},
   review={\MR{1760722}},
   doi={10.1007/BF02923090},
}

\bib{MR3700709}{book}{
   author={Forstneri\v{c}, Franc},
   title={Stein manifolds and holomorphic mappings},
   series={Ergebnisse der Mathematik und ihrer Grenzgebiete. 3. Folge},
   volume={56},
   edition={2},
   publisher={Springer, Cham},
   date={2017},
   pages={xiv+562},
   isbn={978-3-319-61057-3},
   isbn={978-3-319-61058-0},
   review={\MR{3700709}},
   doi={10.1007/978-3-319-61058-0},
}

\bib{MR4142485}{article}{
   author={Forstneri\v c, Franc},
   title={h-principle for complex contact structures on Stein manifolds},
   journal={J. Symplectic Geom.},
   volume={18},
   date={2020},
   number={3},
   pages={733--767},
   issn={1527-5256},
   review={\MR{4142485}},
   doi={10.4310/JSG.2020.v18.n3.a4},
}

\bib{MR1852309}{article}{
   author={Forstneri\v{c}, Franc},
   author={L\o w, Erik},
   author={\O vrelid, Nils},
   title={Solving the $d$- and $\overline\partial$-equations in thin tubes
   and applications to mappings},
   journal={Michigan Math. J.},
   volume={49},
   date={2001},
   number={2},
   pages={369--416},
   issn={0026-2285},
   review={\MR{1852309}},
   doi={10.1307/mmj/1008719779},
}

\bib{MR1213106}{article}{
   author={Forstneri\v{c}, Franc},
   author={Rosay, Jean-Pierre},
   title={Approximation of biholomorphic mappings by automorphisms of ${\bf
   C}^n$},
   journal={Invent. Math.},
   volume={112},
   date={1993},
   number={2},
   pages={323--349},
   issn={0020-9910},
   review={\MR{1213106}},
   doi={10.1007/BF01232438},
}

\bib{MR1296357}{article}{
   author={Forstneri\v{c}, Franc},
   author={Rosay, Jean-Pierre},
   title={Erratum: ``Approximation of biholomorphic mappings by
   automorphisms of $\mathbf C^n$'' [Invent. Math. {\bf 112} (1993), no. 2,
   323--349; MR1213106 (94f:32032)]},
   journal={Invent. Math.},
   volume={118},
   date={1994},
   number={3},
   pages={573--574},
   issn={0020-9910},
   review={\MR{1296357}},
   doi={10.1007/BF01231544},
}

\bib{MR4264040}{article}{
   author={Forn\ae ss, John Erik},
   author={Forstneri\v{c}, Franc},
   author={Wold, Erlend F.},
   title={Holomorphic approximation: the legacy of Weierstrass, Runge,
   Oka-Weil, and Mergelyan},
   conference={
      title={Advancements in complex analysis---from theory to practice},
   },
   book={
      publisher={Springer, Cham},
   },
   isbn={978-3-030-40120-7},
   isbn={978-3-030-40119-1},
   date={2020},
   pages={133--192},
   review={\MR{4264040}},
   doi={10.1007/978-3-030-40120-7\_5},
}

\bib{giraldo2023}{article}{
   author={Giraldo, Luis},
   author={Sánchez Arellano, Guillermo},
   title={Local H-Principles for Partial Holomorphic Relations},
   eprint={arXiv:2304.07618},
}

\bib{MR0341518}{book}{
   author={Golubitsky, M.},
   author={Guillemin, V.},
   title={Stable mappings and their singularities},
   series={Graduate Texts in Mathematics},
   volume={Vol. 14},
   publisher={Springer-Verlag, New York-Heidelberg},
   date={1973},
   pages={x+209},
   review={\MR{0341518}},
}

\bib{MR478225}{article}{
   author={Kazhdan, D.},
   author={Kostant, B.},
   author={Sternberg, S.},
   title={Hamiltonian group actions and dynamical systems of Calogero type},
   journal={Comm. Pure Appl. Math.},
   volume={31},
   date={1978},
   number={4},
   pages={481--507},
   issn={0010-3640},
   review={\MR{478225}},
   doi={10.1002/cpa.3160310405},
}

\bib{MR3794890}{article}{
   author={Kutzschebauch, Frank},
   author={Wold, Erlend Forn\ae ss},
   title={Carleman approximation by holomorphic automorphisms of $\mathbf C^n$},
   journal={J. Reine Angew. Math.},
   volume={738},
   date={2018},
   pages={131--148},
   issn={0075-4102},
   review={\MR{3794890}},
   doi={10.1515/crelle-2015-0056},
}

\bib{MR2854106}{article}{
   author={Manne, Per Erik},
   author={Wold, Erlend Forn\ae ss},
   author={\O vrelid, Nils},
   title={Holomorphic convexity and Carleman approximation by entire
   functions on Stein manifolds},
   journal={Math. Ann.},
   volume={351},
   date={2011},
   number={3},
   pages={571--585},
   issn={0025-5831},
   review={\MR{2854106}},
   doi={10.1007/s00208-010-0605-4},
}

\bib{MR3674984}{book}{
   author={McDuff, Dusa},
   author={Salamon, Dietmar},
   title={Introduction to symplectic topology},
   series={Oxford Graduate Texts in Mathematics},
   edition={3},
   publisher={Oxford University Press, Oxford},
   date={2017},
   pages={xi+623},
   isbn={978-0-19-879490-5},
   isbn={978-0-19-879489-9},
   review={\MR{3674984}},
   doi={10.1093/oso/9780198794899.001.0001},
}

\bib{MR1711344}{book}{
   author={Nakajima, Hiraku},
   title={Lectures on Hilbert schemes of points on surfaces},
   series={University Lecture Series},
   volume={18},
   publisher={American Mathematical Society, Providence, RI},
   date={1999},
   pages={xii+132},
   isbn={0-8218-1956-9},
   review={\MR{1711344}},
   doi={10.1090/ulect/018},
}

\bib{MR0832683}{book}{
   author={Narasimhan, R.},
   title={Analysis on real and complex manifolds},
   series={North-Holland Mathematical Library},
   volume={35},
   note={Reprint of the 1973 edition},
   publisher={North-Holland Publishing Co., Amsterdam},
   date={1985},
   pages={xiv+246},
   isbn={0-444-87776-2},
   review={\MR{0832683}},
}

\bib{SchaerB}{book}{
   author={Sch{\"a}r, Bruno},
   title={The group generated by overshears is dense in $\mathrm{Diff}(\RR^n)$},
   series={Diplomarbeit at the Mathematical Institut},
   publisher={University of Bern},
   date={2007},
}

\bib{MR1155560}{article}{
   author={Smirnov, M. M.},
   author={Chirka, E. M.},
   title={Polynomial convexity of some sets in $\mathbf C^n$},
   language={Russian},
   journal={Mat. Zametki},
   volume={50},
   date={1991},
   number={5},
   pages={81--89},
   issn={0025-567X},
   translation={
      journal={Math. Notes},
      volume={50},
      date={1991},
      number={5-6},
      pages={1151--1157 (1992)},
      issn={0001-4346},
   },
   review={\MR{1155560}},
   doi={10.1007/BF01157703},
}

\bib{MR1829353}{article}{
   author={Varolin, Dror},
   title={The density property for complex manifolds and geometric
   structures},
   journal={J. Geom. Anal.},
   volume={11},
   date={2001},
   number={1},
   pages={135--160},
   issn={1050-6926},
   review={\MR{1829353}},
   doi={10.1007/BF02921959},
}

\bib{MR1626461}{article}{
   author={Wilson, George},
   title={Collisions of Calogero-Moser particles and an adelic Grassmannian},
   note={With an appendix by I. G. Macdonald},
   journal={Invent. Math.},
   volume={133},
   date={1998},
   number={1},
   pages={1--41},
   issn={0020-9910},
   review={\MR{1626461}},
   doi={10.1007/s002220050237},
}

\end{biblist}
\end{bibdiv}

\end{document}